\newtheorem{thm}{Theorem}[section]
\newtheorem{lemma}[thm]{Lemma}
\newtheorem{fact}[thm]{Fact}
\newtheorem{cor}[thm]{Corollary}
\newtheorem{prop}[thm]{Proposition}
\newtheorem{defn}[thm]{Definition}
\newtheorem*{question1}{Question 1}
\newtheorem*{question2}{Question 2}
\newtheorem*{question3}{Question 3}
\newtheorem*{question4}{Question 4}
\newcommand{\dom}{\operatorname{dom}}
\newcommand{\red}{\color{red}}
\newcommand{\dcl}{\operatorname{dcl}}
\newcommand{\tp}{\operatorname{tp}}
\newcommand{\calM}{\mathcal{M}}
\newcommand{\calR}{\mathcal{R}}
\newcommand{\calA}{\mathcal{A}}
\newcommand{\calB}{\mathcal{B}}
\newcommand{\calC}{\mathcal{C}}
\newcommand{\calL}{\mathcal{L}}
\newcommand{\calN}{\mathcal{N}}
\newcommand{\N}{\mathbb{N}}
\newcommand{\R}{\mathbb{R}}
\newcommand{\Q}{\mathbb{Q}}
\newcommand{\acl}{\operatorname{acl}}
\newcommand{\interior}{\operatorname{int}}
\newcommand{\Sk}{\operatorname{Sk}}
\newcommand{\eqdf}{=_{\operatorname{df}}}
\newcommand{\mbb}[1]{\mathbb{#1}}
\newcommand{\mc}[1]{\mathcal{#1}}
\renewcommand{\tilde}[1]{\widetilde{#1}}
\newcommand{\into}{\hookrightarrow}
\title{Pathological examples of structures with o-minimal open core}
\author{Alexi Block Gorman}
\address
{Department of Mathematics\\University of Illinois at Urbana-Champaign\\1409 West Green Street\\Urbana, IL 61801}
\email{atb2@illinois.edu}
\author{Erin Caulfield}
\address{Department of Mathematics \& Statistics\\
McMaster University\\
Hamilton Hall\\
1280 Main Street West\\
Hamilton, Ontario, Canada\\
L8S 4K1}
\email{caulfiee@mcmaster.ca}
\author{Philipp Hieronymi}
\address
{Department of Mathematics\\University of Illinois at Urbana-Champaign\\1409 West Green Street\\Urbana, IL 61801}
\email{phierony@illinois.edu}
\urladdr{http://faculty.math.illinois.edu/\textasciitilde phierony}
\date{\today}
\thanks{This is a preprint version. Later versions might contain significant changes.}
\begin{document}

\begin{abstract}
    This paper answers several open questions around structures with o-minimal open core. We construct an expansion of an o-minimal structure $\mathcal{R}$ by a unary predicate such that its open core is a proper o-minimal expansion of $\mathcal{R}$. We give an example of a structure that has an o-minimal open core and the exchange property, yet defines a function whose graph is dense. Finally, we produce an example of a structure that has an o-minimal open core and definable Skolem functions, but is not o-minimal. 
\end{abstract}

\maketitle

\section{Introduction}

Introduced by Miller and Speissegger \cite{MS99} the notion of an open core has become a mainstay of the model-theoretic study of ordered structures. However, there are still many rather basic questions, in particular about structures with o-minimal open cores, that have remained unanswered. In this paper, we are able to settle some of the questions raised by Dolich, Miller and Steinhorn \cite{DMS, DMS2}.\newline

\noindent Throughout this paper, $\mathcal{R}$ denotes a fixed, but arbitrary expansion of a dense linear order $(R,<)$ without endpoints. We now recall several definitions from the aforementioned papers. We denote by $\mathcal{R}^{\circ}$ the structure $(R,(U))$, where $U$ ranges over the open sets of all arities definable in $\mathcal{R}$, and call this structure \textbf{the open core of $\mathcal{R}$}. \newline

\noindent Given two structures $\mathcal{S}_1$ and $\mathcal{S}_2$ with the same universe $S$, we say $\mathcal{S}_1$ and $\mathcal{S}_2$ are \textbf{interdefinable} (short: $\mathcal{S}_1\eqdf \mathcal{S}_2$) if $\mathcal{S}_1$ and $\mathcal{S}_2$ define the same sets. 
For a given theory $T$ extending the theory of dense linear orders, we say that a theory $T'$ is an \textbf{open core of $T$} if for every $\mathcal M \models T$ there exists $\mathcal M' \models T'$ such that $\mathcal M^{\circ}\eqdf \mathcal{M'}.$

\begin{question1}[{\cite[p. 1408]{DMS}}]\label{ques:unary} If $S \subseteq R$ and $(\mathcal{R},S)^{\circ}$ is  o-minimal, is $(\mathcal{R},S)^{\circ} \eqdf \mathcal{R}^{\circ}$?
\end{question1}
\noindent We give a negative answer to this question by constructing an expansion of the real field by a single unary predicate whose open core is o-minimal, but defines an irrational power function. It is clear from the construction in Section 2 that there are similar examples of expansions of the real ordered additive group that do not define an ordered field.\newline

\noindent We say $\mathcal{R}$ is \textbf{definably complete} (short: $\mathcal{R} \models \operatorname{DC}$) if every definable unary set has both a supremum and an infimum in $R\cup \{\pm \infty\}$.
We denote by $\dcl_{\mathcal{R}}$ the definable closure operator in $\mathcal{R}$, and often drop the subscript $\mathcal{R}$. 
We say that $\mathcal R$ has the \textbf{exchange property} (short: $\mathcal{R}\models \operatorname{EP}$) if $b \in \operatorname{dcl}(S\cup \{a\})$ for all $S\subseteq R$ and $a,b\in R$ such that $a\in \operatorname{dcl}(S\cup\{b\})\setminus \operatorname{dcl}(S)$. For a theory $T$, we say $T$ has the exchange property (short: $T\models \operatorname{EP}$) if every model of $T$ has the exchange property.
We write \textbf{$\mathcal{R} \models \operatorname{NIP}$}  if its theory does not have the independence property as introduced by Shelah \cite{Shelah}. We refer the reader to Simon \cite{S15} for a modern treatment of NIP and related model-theoretic tameness notions.

\begin{question2}[{\cite[p. 1409]{DMS}}]\label{DCEPplus} If $\mathcal{R} \models \operatorname{DC} + \operatorname{EP} + \operatorname{NIP}$ and expands an ordered group, is $\mathcal{R}$ o-minimal?
\end{question2}

\noindent By \cite[p. 1374]{DMS} we know that $\mathcal{R}$ has o-minimal open core if $\mathcal{R} \models \operatorname{DC} + \operatorname{EP}$ and expands an ordered group. Thus Question 2 asks whether there is a combinatorical model-theoretic tameness condition that can be added to force o-minimality. Again, we give a negative answer to this question. We construct a  counterexample as follows: Let $\Q(t)$ be the field of rational functions in a single variable $t$. We consider an expansion $\calR_t$ of the ordered real additive group  $(\mathbb{R},<,+,0,1)$ into a $\Q(t)$-vector space such that for all $c\in \Q(t)\setminus \Q$, the graph of the function $x\mapsto cx$ is dense in $\R^2$. We show that $\mathcal{R}_t\models \operatorname{DC} + \operatorname{EP}+\operatorname{NIP}$, but is not o-minimal. \newline
  
\noindent The structure $\calR_t$ has infinite dp-rank. By Simon \cite{Simon-DP}, if $\mathcal{R}\models \operatorname{DC}$, expands an ordered group and has dp-rank 1, then $\mathcal{R}$ is o-minimal. However, we do not know whether $\mathcal{R}$ is o-minimal if $\mathcal{R}\models \operatorname{DC} + \operatorname{EC}$, expands an ordered group and has finite dp-rank.\newline

\noindent In addition to showing that $\calR_t\models \operatorname{DC} + \operatorname{EP}+\operatorname{NIP}$, we prove that its open core is interdefinable with its o-minimal reduct $(\mathbb{R},<,+,0,1)$. 
Since the graph of $x\mapsto cx$ is dense for $c\in \Q(t)\setminus \Q$, the theory of $\calR_t$ provides a negative answer to the following question.

\begin{question3}[{\cite[p. 705]{DMS2}}] Let $T$ be a complete o-minimal extension of the theory of densely ordered groups. If $\tilde{T}$ is any theory (in any language) having $T$ as an open core, and some model of $\tilde{T}$ defines a somewhere dense graph, must $\operatorname{EP}$ fail for $\tilde{T}$?
\end{question3}

\noindent Our counterexample  $\calR_t$ does not expand a field and we don't know whether Question 2 (or Question 3) has a positive answer if we require $\mathcal{R}$ (or $T$) to expand an ordered field.\newline

\noindent We say that $\mathcal R$ has \textbf{definable Skolem functions} if for every definable set $A\subseteq R^m \times R^n$ there is a definable function $f: R^m \to R^n$ such that $(a,f(a)) \in A$ whenever $a \in A$ and there exists $b\in \R^n$ with $(a,b) \in A$. Every o-minimal expansion of an ordered group with a distinguished positive element has definable Skolem functions, but all documented examples of non-o-minimal structures with o-minimal core do not.

\begin{question4}[{\cite[p. 1409]{DMS}}] If $\mathcal{R}$ has definable Skolem functions and $\mathcal{R}^{\circ}$ is o-minimal, is $\mathcal{R}$ o-minimal?
\end{question4}

\noindent The answer is again negative. We say $\calR$ satisfies \textbf{uniform finitness} (short: $\calR\models \operatorname{UF}$) if for every $m,n\in \N$ and every $A\subseteq R^{m+n}$ definable in $\calR$ there exists $N\in \N$ such that for every $a\in R^m$ the set $\{ b \in R^n \ : \ (a,b) \in A\}$ is either infinite or contains at most $N$ elements. By \cite[Theorem A]{DMS}, if $\calR\models \operatorname{DC}+\operatorname{UF}$ and expands an ordered group, then $\calR^{\circ}$ is o-minimal. Using a construction due to Winkler \cite{W75} and following a strategy of Kruckman and Ramsey \cite{KR18}, we establish that if $\mathcal R \models \operatorname{DC} + \operatorname{UF}$, then $\calR$ has an expansion $\mathcal S$ such that $\mathcal S$ has definable Skolem functions and satisfies $\mathcal S^{\circ} \eqdf \mathcal{R}^{\circ}$. Thus if $\calR$ also expands an ordered group, then $\mathcal R^{\circ}$ is o-minimal and so is $\mathcal S^{\circ}$.

\subsection*{Acknowledgements} A.B.G. was  supported by the National Science Foundation Graduate Research Fellowship Program under Grant No. DGE -- 1746047.  P.H. was partially supported by NSF grant DMS-1654725. We thank Pantelis Eleftheriou for helpful conversations on the topic of this paper, in particular for bringing Question 1 to the attention of P.H. at a conference at the Bedlewo Conference Center. We thank Chris Miller for helpful comments on a draft of this paper.

\subsection*{Notation}  We will use $m, n$ for natural numbers and $\kappa$ for a cardinal. 
Let $\calL$ be a language and $T$ an $\calL$-theory. Let $\mathcal{M}\models T$. We follow the usual convention to denote the universe of $\calM$ by $M$. In this situation, $\calL$-definable means $\calL$-definable with parameters. Let $b$ be a tuple of elements in $M$, and let $A\subseteq M$. We write $\tp_{\calL}(b|A)$ for the $\calL$-type of $b$ over $A$.
If $\calN$ is another model of $T$ and $h$ is an embedding of a substructure of $\mathcal{M}$ containing $A$ into $\calN$, then $h \tp(b|A)$ is the type containing all formulas of the form $\varphi(x,h(a))$ where $\varphi(x,a)\in \tp(b|A)$.

\section{Question \ref{ques:unary}}
Let $\overline{\mathbb{R}}$ be the real ordered field $(\mathbb{R},<,+,\cdot)$ and let $\mathbb{R}_{\exp}$ be the expansion of the real field by the exponential function $\exp$. Let $I\subseteq \mathbb{R}$ be a dense $\operatorname{dcl}_{\mathbb{R}_{\exp}}$-independent set.  
Let $\tau\in I$ be such that $\tau > 1$. Set 
\[
J := \bigcup_{a \in I\setminus\{\tau\}} \{|a|,|a|^{\tau},|a|+|a|^{\tau}\}.
\]
By \cite[2.25]{DMS2} the open core of $(\mathbb{R}_{\exp},I)^{\circ}$ is interdefinable with $\mathbb{R}_{\exp}$. Since $(\overline{\mathbb{R}},J)$ is a reduct of  $(\mathbb{R}_{\exp},I)$, we have that $(\overline{\mathbb{R}},J)^{\circ}$ is a reduct of $(\mathbb{R}_{\exp},I)^{\circ}$. As the latter structure is o-minimal, we have that $(\overline{\mathbb{R}},J)^{\circ}$ is o-minimal as well. In order to show that Question \ref{ques:unary} has a negative answer, it is left to show that $(\overline{\mathbb{R}},J)^{\circ}$ defines a set not definable in $\overline{\mathbb{R}}$. Since $\overline{\mathbb{R}}$ only defines raising to rational powers, it suffices to prove the definability of $x\mapsto x^{\tau}$ on an unbounded interval.

\begin{lemma}\label{lem:pow} Let $u_1,u_2,u_3 \in J$ such that $1<u_1<u_2$ and $u_1+u_2=u_3$. Then there is $a\in I\setminus \{\tau\}$ such that $u_1=|a|$ and $u_2=|a|^{\tau}$. 
\end{lemma}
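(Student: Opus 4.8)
The plan is to first use the $\dcl_{\mathbb{R}_{\exp}}$-independence of $I$ to show that the three elements $u_1,u_2,u_3$ all arise from a single $a\in I\setminus\{\tau\}$, and only then to determine by elementary order arithmetic which of the three ``slots'' $|a|,\ |a|^{\tau},\ |a|+|a|^{\tau}$ each $u_i$ occupies. To set up, observe that every element of $J$ has the form $\psi(|a|)$ for some $a\in I\setminus\{\tau\}$ and some $\psi$ in the list $\psi_1\colon x\mapsto x$, $\psi_2\colon x\mapsto x^{\tau}$, $\psi_3\colon x\mapsto x+x^{\tau}$. Each $\psi_j$ restricts to a strictly increasing bijection of $(0,\infty)$ onto an interval, and both $\psi_j$ and $\psi_j^{-1}$ are definable in $\mathbb{R}_{\exp}$ with the single parameter $\tau$ (for $\psi_3$ this uses that $x\mapsto x+x^{\tau}$ is strictly increasing, so its inverse is the definable solution function). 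Consequently, writing $u_i=\psi_{j_i}(|a_i|)$ with $a_i\in I\setminus\{\tau\}$, we have $u_i\in\dcl(a_i,\tau)$ on the one hand, and $|a_i|\in\dcl(u_i,\tau)$ on the other; since $a_i\in\{|a_i|,-|a_i|\}\subseteq\dcl(|a_i|)$, this yields $a_i\in\dcl(u_i,\tau)$.

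First I would run the independence argument. From $u_1+u_2=u_3$ one can solve for any single $u_i$ in terms of the other two, so $u_i\in\dcl(\{u_k:k\neq i\})$ and therefore $a_i\in\dcl(u_i,\tau)\subseteq\dcl(\{a_k:k\neq i\}\cup\{\tau\})$ for each $i$. Since $a_i\in I$ and $\tau\in I$, the $\dcl_{\mathbb{R}_{\exp}}$-independence of $I$ forces $a_i\in\{a_k:k\neq i\}\cup\{\tau\}$, and as $a_i\neq\tau$ we obtain $a_i\in\{a_k:k\neq i\}$ for every $i\in\{1,2,3\}$. A short case check on the number of distinct values among $a_1,a_2,a_3$ finishes this step: three distinct values immediately violates the membership relations, while ``two equal and one apart'' makes the lone element fail its own membership, so $a_1=a_2=a_3=:a$.

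It remains to identify the slots. Put $s=|a|$ and $p=|a|^{\tau}$, so the three available values are $s,\ p,\ s+p$, with $s+p$ their maximum. Because $1<u_1<u_2$, at least two values among $\{s,p,s+p\}$ exceed $1$; if $s<1$ then $p=s^{\tau}<1$ as well and only $s+p$ could exceed $1$, a contradiction, so $s>1$ and hence $s<p<s+p$ (using $\tau>1$). From $u_3=u_1+u_2>u_2>u_1$ we get the strict chain $u_1<u_2<u_3$, i.e.\ three distinct values drawn from the three-element set $\{s,p,s+p\}$. Matching them against $s<p<s+p$ forces $u_1=s=|a|$ and $u_2=p=|a|^{\tau}$ (with $u_3=s+p$), which is exactly the claim.

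The main obstacle I anticipate is the bookkeeping in the independence step rather than the arithmetic: one must verify that each $\psi_j^{-1}$ is genuinely definable over the single parameter $\tau$ (the delicate case being $\psi_3$, where injectivity of $x\mapsto x+x^{\tau}$ is precisely what allows recovery of $|a_i|$ from $u_i$), and one must track $\tau$ carefully so that the containment applied to the independence of $I$ is membership in $\dcl(\{a_k:k\neq i\}\cup\{\tau\})$ and not in some larger definable closure.
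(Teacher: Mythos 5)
Your proposal is correct and takes essentially the same route as the paper's proof: the paper likewise uses the interdefinability of $|a|$, $|a|^{\tau}$, $|a|+|a|^{\tau}$ over $\tau$ together with the $\dcl_{\mathbb{R}_{\exp}}$-independence of $I$ to force $u_1,u_2,u_3$ into the triple of a single $a\in I\setminus\{\tau\}$, and then concludes by the same order comparison. Your additional bookkeeping (explicit definability of the inverses $\psi_j^{-1}$ over $\tau$, the case check showing $a_1=a_2=a_3$) merely spells out what the paper compresses; the only cosmetic nit is that your dichotomy ``$s<1$ versus $s>1$'' should also dispatch $s=1$, which the identical reasoning does, since then $s=p=1$ and only $s+p$ exceeds $1$.
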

\begin{proof}
For $a\in I\setminus \{\tau\}$  observe that $|a|$, $|a|^{\tau}$ and $|a|+|a|^{\tau}$ are interdefinable in $\mathbb{R}_{\exp}$ over $\tau$.
Since $u_1+u_2=u_3$, we have $u_1,u_2,u_3$ are $\operatorname{dcl}_{\mathbb{R}_{\exp}}$-dependent. Because $I$ is $\dcl_{\R_{exp}}$-independent, there are $a\in I\setminus \{\tau\}$ and $i,j\in\{1,2,3\}$ such that 
\[
u_i,u_j \in \{|a|, |a|^{\tau},|a|+|a|^{\tau}\}. 
\]
Let $\ell \in \{1,2,3\}$ such that $\ell \neq i$ and $\ell \neq j$. Note $u_{\ell}$ is $\operatorname{dcl}_{\mathbb{R}_{\exp}}$-dependent over $u_i$ and $u_j$. Thus $u_{\ell}\in \{|a|, |a|^{\tau},|a|+|a|^{\tau}\}.$ Since $|a|>0$, we obtain from $u_1+u_2=u_3$ that
\[
u_1,u_2 \in \{|a|,|a|^{\tau}\}.
\]
Since $1<u_1<u_2$ and $\tau>0$, we have that $u_1=|a|$ and $u_2=|a|^{\tau}$.
\end{proof}

\begin{prop}
The graph of $x\mapsto x^{\tau}$ on $\mathbb{R}_{\geq 1}$ is definable in $(\overline{\mathbb{R}},J)^{\circ}$.
\end{prop}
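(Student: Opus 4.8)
The plan is to exhibit the graph of $x\mapsto x^{\tau}$ as the topological closure of an explicitly definable dense partial graph, and then to observe that such a closure automatically lands in the open core. Concretely, working first in $(\overline{\mathbb{R}},J)$, I would introduce the set
\[
G_0 := \{(u_1,u_2)\in R^2 : u_1,u_2\in J,\ 1<u_1<u_2,\ \exists u_3\in J\ (u_1+u_2=u_3)\},
\]
which is definable in $(\overline{\mathbb{R}},J)$ since $J$ is a predicate and $+,<$ are available. Lemma \ref{lem:pow} says every pair in $G_0$ has the form $(|a|,|a|^{\tau})$ for some $a\in I\setminus\{\tau\}$; conversely, the definition of $J$ together with $\tau>1$ shows that each $a\in I\setminus\{\tau\}$ with $|a|>1$ contributes exactly such a pair (one has $1<|a|<|a|^{\tau}$ and $|a|+|a|^{\tau}\in J$). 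Hence
\[
G_0=\{(|a|,|a|^{\tau}) : a\in I\setminus\{\tau\},\ |a|>1\},
\]
a subset of the graph of $x\mapsto x^{\tau}$.

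Next I would check that $G_0$ is dense in the full graph over $\mathbb{R}_{\geq 1}$. Since $I$ is dense in $\mathbb{R}$, the projection $\{|a| : a\in I\setminus\{\tau\},\ |a|>1\}$ is dense in $(1,\infty)$ and accumulates at $1$ (removing the single point $\tau$ changes nothing); as $x\mapsto x^{\tau}$ is continuous on $[1,\infty)$, the closure of $G_0$ in $R^2$ is precisely $\{(t,t^{\tau}):t\in\mathbb{R}_{\geq 1}\}$, the graph we want.

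The decisive step is then to transfer this closure into the open core. The complement $R^2\setminus\overline{G_0}$ is open and is itself definable in $(\overline{\mathbb{R}},J)$: it is the set of points admitting an open box disjoint from $G_0$, which is a first-order condition over the definable predicate $G_0$. Being open and definable, $R^2\setminus\overline{G_0}$ is one of the predicates generating $(\overline{\mathbb{R}},J)^{\circ}$, so its complement $\overline{G_0}$ — the graph of $x\mapsto x^{\tau}$ on $\mathbb{R}_{\geq 1}$ — is definable in $(\overline{\mathbb{R}},J)^{\circ}$.

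I expect the main obstacle to be conceptual rather than computational, and to lie exactly in this last transition. The partial graph $G_0$ is definable in $(\overline{\mathbb{R}},J)$ but is nowhere dense, hence not open, so there is no reason for $G_0$ to belong to the open core; indeed the whole point of the construction is that the non-o-minimal data is encoded in this dense-but-incomplete object. The resolution is to pass to the closure, whose \emph{exterior} is open and definable, which is precisely the kind of set the open core is built to contain. Everything else — the identification of $G_0$ via Lemma \ref{lem:pow}, the density argument, and continuity of $x\mapsto x^{\tau}$ — is routine.
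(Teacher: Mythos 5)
Your proof is correct and follows essentially the same route as the paper: both exhibit the set $\{(|a|,|a|^{\tau}) : a\in I\setminus\{\tau\},\ |a|>1\}$ as definable in $(\overline{\mathbb{R}},J)$ via Lemma \ref{lem:pow}, take its closure using density of $I$, and note the closure lies in the open core. You merely make explicit two steps the paper leaves implicit — the converse inclusion showing every such $a$ actually yields a point of $G_0$, and the observation that a closure of a definable set is open-core-definable because its complement is open and definable — both of which are fine.
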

\begin{proof}
By Lemma \ref{lem:pow} the structure $(\overline{\mathbb{R}},J)$ defines 
\[
\{ (|a|,|a|^{\tau})  \ : |a|>1, \ a \in I\setminus \{\tau\} \}.
\]
Since $I$ is dense in $\R$, the closure of this set is the graph of $x\mapsto x^{\tau}$ on $\mathbb{R}_{\geq 1}$, and hence definable in $(\overline{\mathbb{R}},J)^{\circ}$.
\end{proof}
\noindent We conclude that $(\overline{\mathbb{R}},J)^{\circ}$ is a proper expansion of $\overline{\mathbb{R}}$.

\section{Questions 2 and 3}

In this section we give negative answers to Questions 2 and 3. Let $\mathbb{Q}(t)$ be the field of rational functions in the variable $t$. 
We expand $(\R,<,+,0,1)$ to a $\Q(t)$-vector space such that for each non-constant $q(t)\in \Q(t)$ the graph of multiplication by $q(t)$ is dense.\newline

\noindent We now construct such a $\Q(t)$-vector space structure on $\R$. Let $\mathbf{1}$ be the multiplicative identity of $\Q(t)$.
We fix a dense basis $\calB$ of $\R$ as a $\Q$-vector space, and a basis  $I$ of  $\Q(t)$ as a $\Q$-vector space such that $\mathbf{1} \in I$.
We choose a sequence of functions $\{\tilde{f_{\gamma}}: I \to \calB\}_{\gamma \in 2^{\aleph_0}}$ such that
\[ 
\calB = \displaystyle\bigcup_{\gamma \in 2^{\aleph_0}} \tilde{f_{\gamma}}(I)
\]
and for all $\gamma \in 2^{\aleph_0}$:
    \begin{itemize}
        \item $\tilde{f_{\gamma}}$ is injective,
            \item for all $\eta \in 2^{\aleph_0}$ with $\eta \neq \gamma$, $\tilde{f_{\eta}}(I) \cap \tilde{f_{\gamma}}(I) = \emptyset$,
        \item for all open intervals $J_1,\ldots ,J_m\subseteq \R$ open intervals and all pairwise distinct $p_1(t),\ldots ,p_m(t)\in I$ there exists $\gamma \in 2^{\aleph_0}$ such that \[\tilde{f_{\gamma}}(p_1(t)) \in J_1, \ldots ,\tilde{f_{\gamma}}(p_m(t))\in J_m.\] 
    \end{itemize}

\noindent Since the order topology on the real line has a countable base, it is easy to see that such a sequence of functions exists. For each $\gamma \in 2^{\aleph_0}$, $\tilde{f_{\gamma}}$ is defined on the basis $I$ of $\mbb{Q}(t)$. Therefore, we can extend each $\tilde{f_{\gamma}}: I \to \mc{B}$ to a $\mbb{Q}$-linear map $f_{\gamma}: \mbb{Q}(t) \to \mbb{R}$.

\begin{lemma}\label{lem:fgamma}
Let $a\in \R$. Then there are unique $\gamma_1,\dots, \gamma_n \in 2^{\aleph_0}$ and $p_1(t),\dots,p_n(t) \in \Q(t)$ such that 
\[
a = f_{\gamma_1}(p_1(t)) + \dots + f_{\gamma_n}(p_n(t)).
\]
\end{lemma}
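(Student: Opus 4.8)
The plan is to pass back and forth between the two $\Q$-bases in play: the basis $\calB$ of $\R$ and the basis $I$ of $\Q(t)$. Since $\calB$ is a $\Q$-basis of $\R$, every $a\in\R$ has a unique expression $a=\sum_{l}q_l b_l$ as a finite $\Q$-linear combination of pairwise distinct $b_l\in\calB$ with each $q_l\in\Q\setminus\{0\}$. I would take this unique $\calB$-expansion as the common starting point for both existence and uniqueness, with the degenerate case $a=0$ (the empty sum, $n=0$) handled trivially.

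For existence, I would use that the images $\{\tilde{f_{\gamma}}(I)\}_{\gamma}$ form a partition of $\calB$: they are pairwise disjoint by the second bullet and cover $\calB$ by the displayed union. Hence each $b_l$ lies in $\tilde{f_{\gamma}}(I)$ for exactly one $\gamma$, and since $\tilde{f_{\gamma}}$ is injective we may write $b_l=\tilde{f_{\gamma}}(c_l)$ for a unique $c_l\in I$. Grouping the finitely many terms of $a=\sum_l q_l b_l$ by the value of this $\gamma$ yields distinct $\gamma_1,\dots,\gamma_n$, and for each $k$ the sum of the terms carrying label $\gamma_k$ is $\sum q_l\,\tilde{f_{\gamma_k}}(c_l)=f_{\gamma_k}(p_k(t))$ by $\Q$-linearity of $f_{\gamma_k}$, where $p_k(t)\eqdf\sum q_l c_l\in\Q(t)$. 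Since the $c_l$ involved are distinct elements of $I$ with nonzero coefficients, $p_k(t)\neq 0$, which produces the required decomposition.

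For uniqueness, I would run this computation in reverse. Given any representation $a=\sum_{k=1}^{n}f_{\gamma_k}(p_k(t))$ with the $\gamma_k$ distinct and each $p_k(t)\neq 0$, expand each $p_k(t)=\sum_j q_{kj}c_{kj}$ in the basis $I$ (distinct $c_{kj}\in I$, nonzero $q_{kj}$), so that $f_{\gamma_k}(p_k(t))=\sum_j q_{kj}\,\tilde{f_{\gamma_k}}(c_{kj})$. The key observation is that all the elements $\tilde{f_{\gamma_k}}(c_{kj})\in\calB$ occurring across all $k$ and $j$ are pairwise distinct: within a fixed $k$ this is injectivity of $\tilde{f_{\gamma_k}}$, and across different $k$ it is the disjointness of $\tilde{f_{\gamma_k}}(I)$ and $\tilde{f_{\gamma_{k'}}}(I)$ for $\gamma_k\neq\gamma_{k'}$. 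Thus $a=\sum_{k,j}q_{kj}\,\tilde{f_{\gamma_k}}(c_{kj})$ is precisely the expansion of $a$ in the basis $\calB$, which is unique. Reading this off, its support determines, via the partition, exactly which $\gamma$'s occur, and for each such $\gamma_k$ the coefficients recover $\sum_j q_{kj}c_{kj}=p_k(t)$. Hence the $\gamma_k$ and $p_k(t)$ are determined up to a simultaneous permutation of the summands, giving uniqueness.

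I expect the only real content — as opposed to bookkeeping — to be the distinctness claim in the uniqueness step, which is exactly where the disjointness of the images together with the injectivity of the $\tilde{f_{\gamma}}$ are used; everything else is a direct translation through the two basis expansions. Note that the density condition (the third bullet) plays no role here and is reserved for the later arguments showing that the graphs of the resulting multiplications are dense.
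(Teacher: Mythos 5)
Your proposal is correct and follows essentially the same route as the paper: expand $a$ uniquely in the $\Q$-basis $\calB$, use that the sets $\tilde{f_{\gamma}}(I)$ partition $\calB$ together with injectivity of each $\tilde{f_{\gamma}}$ to pull the basis elements back to $I$, and absorb the rational coefficients via $\Q$-linearity of the $f_{\gamma}$'s. Your write-up is in fact more careful than the paper's (which leaves the grouping of terms by $\gamma$ and the reverse-direction uniqueness argument implicit), and your observation that the density condition plays no role here is accurate.
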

\begin{proof}
Since $\calB$ is a basis of $\R$ as a $\Q$-vector space, there are unique $b_1,\dots,b_n\in \mathcal{B}$ and $u_1,\dots,u_n\in \Q$ such that $a=u_1b_1 + \dots +u_n b_n.$ By the above construction, there are unique $\gamma_1,\dots,\gamma_n\in 2^{\aleph_0}$ and $q_1(t),\dots,q_n(t)\in I$ such that $b_i=f_{\gamma_i}(q_i(t))$ for $i=1,\dots,n$.
Then  by $\Q$-linearity of the $f_{\gamma_i}$'s
\begin{align*}
a &= u_1b_1 + \dots + u_n b_n\\
&= u_1f_{\gamma_1}(q_1(t)) + \dots + u_n f_{\gamma_n}(q_n(t))\\
&=f_{\gamma_1}(u_1q_1(t)) + \dots + f_{\gamma_n}(u_nq_n(t)).
\end{align*}
Set $p_i := u_iq_i(t)$ for $i=1,\dots,n$.
\end{proof}

\noindent 
We now introduce a $\Q$-linear map $\lambda: \Q(t) \times \R \to \R$.
Let $q(t)\in \Q(t)$ and $a \in \R$. 
By Lemma \ref{lem:fgamma} there are unique $\gamma_1,\dots, \gamma_n \in 2^{\aleph_0}$ and $p_1(t),\dots,p_n(t) \in \Q(t)$ such that 
\[
a = f_{\gamma_1}(p_1(t)) + \dots + f_{\gamma_n}(p_n(t)).
\]
We define
\[
\lambda(q(t),a) := f_{\gamma_1}(q(t)\cdot p_1(t)) + \dots + f_{\gamma_n}(q(t) \cdot p_n(t)).
\]
By Lemma \ref{lem:fgamma}, the function $\lambda$ is well-defined. For $q(t) \in \mbb{Q}(t)$, we write $\lambda_{q(t)}$ for the map taking $a \in \R$ to $\lambda(q(t),a).$

\begin{prop}\label{prop:qtvs}
The additive group $(\R,+)$ with $\lambda$ as scalar multiplication is an $\Q(t)$-vector space.
\end{prop}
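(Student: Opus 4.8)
The plan is to verify the four $\Q(t)$-vector space axioms for the additive group $(\R,+)$ equipped with $\lambda$: namely $\lambda_{\mathbf 1}=\mathrm{id}$, additivity in each argument $\lambda(q,a+b)=\lambda(q,a)+\lambda(q,b)$ and $\lambda(q_1+q_2,a)=\lambda(q_1,a)+\lambda(q_2,a)$, and the compatibility $\lambda(q_1 q_2,a)=\lambda(q_1,\lambda(q_2,a))$. Well-definedness of each $\lambda_q$ is already guaranteed by Lemma \ref{lem:fgamma}, so no separate argument is needed there.

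Before checking the axioms I would record the structural picture underlying the construction. Since the sets $\tilde{f_\gamma}(I)$ are pairwise disjoint, each $\tilde{f_\gamma}$ is injective, and their union is the $\Q$-basis $\calB$, the $\Q$-linear extensions $f_\gamma$ are injective and have $\Q$-linearly independent images, so that $\R$ decomposes as an internal direct sum of $\Q$-vector spaces
\[
\R=\bigoplus_{\gamma\in 2^{\aleph_0}} f_\gamma(\Q(t)),
\]
with each $f_\gamma\colon\Q(t)\to f_\gamma(\Q(t))$ a $\Q$-linear isomorphism. In this language Lemma \ref{lem:fgamma} is exactly the statement that every real has a unique finitely supported coordinate expansion, and $\lambda_q$ is the operator that acts in each coordinate by $f_\gamma\circ m_q\circ f_\gamma^{-1}$, where $m_q$ denotes multiplication by $q(t)$ on $\Q(t)$.

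Granting this, I would read the axioms off the unique representation $a=f_{\gamma_1}(p_1(t))+\dots+f_{\gamma_n}(p_n(t))$. The identity axiom is immediate from $\mathbf 1\cdot p_i(t)=p_i(t)$. Scalar-additivity follows from $(q_1+q_2)p_i(t)=q_1 p_i(t)+q_2 p_i(t)$ together with additivity of each $f_{\gamma_i}$. The compatibility axiom follows because, for $q_2\neq 0$, the expression $\sum_i f_{\gamma_i}(q_2 p_i(t))$ is already the unique representation of $\lambda(q_2,a)$ (the $\gamma_i$ remain distinct and the $q_2 p_i(t)$ remain nonzero), so applying $\lambda_{q_1}$ multiplies each argument by $q_1$ and yields $\sum_i f_{\gamma_i}\big((q_1 q_2)p_i(t)\big)=\lambda(q_1 q_2,a)$; the case $q_2=0$ is trivial since then every argument is $0$.

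I expect the only step requiring genuine care to be additivity in the vector argument, $\lambda(q,a+b)=\lambda(q,a)+\lambda(q,b)$, because the unique representations of $a$ and $b$ may share some indices $\gamma$ and so cannot simply be concatenated. Here I would combine terms with a common index using the $\Q$-linearity of $f_\gamma$ to obtain the representation of $a+b$, apply the definition of $\lambda_q$, and then split the terms back apart using $q(t)\cdot\big(p(t)+r(t)\big)=q(t)p(t)+q(t)r(t)$ in $\Q(t)$. The direct-sum reformulation above is what makes this bookkeeping routine: once $\lambda_q$ is seen to act diagonally and to commute coordinatewise with field multiplication on $\Q(t)$, every axiom reduces to the corresponding identity in $\Q(t)$ regarded as a vector space over itself.
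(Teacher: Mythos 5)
Your proposal is correct and takes essentially the same approach as the paper: both verify the axioms coordinatewise via the unique decomposition of Lemma \ref{lem:fgamma}, and your argument for the compatibility axiom $\lambda_{q_1(t)\cdot q_2(t)}=\lambda_{q_1(t)}\circ\lambda_{q_2(t)}$ is exactly the chain of equalities in the paper's proof. The paper checks only that one axiom and explicitly leaves the rest to the reader, so your direct-sum framing, your bookkeeping for additivity in the vector argument when the representations of $a$ and $b$ share indices, and your observation that $\sum_i f_{\gamma_i}(q_2(t)\cdot p_i(t))$ is again the canonical representation when $q_2(t)\neq 0$ (with the $q_2(t)=0$ case trivial) simply supply details that the paper's displayed computation uses silently.
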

\begin{proof}
We only verify the following vector spaces axioms: for all $a \in \R$ and for all $q_1(t),q_2(t) \in \Q(t)$.
\[
\lambda_{q_1(t) \cdot q_2(t)}(a) = \lambda_{q_1(t)}\big( \lambda_{q_2(t)}(a)\big).
\]
The other axioms can be checked using similar arguments and we leave this to the reader.\newline

\noindent Let $a \in \R$ and let $q_1(t),q_2(t) \in \Q(t)$.
By Lemma \ref{lem:fgamma} there are unique $\gamma_1,\dots, \gamma_n \in 2^{\aleph_0}$ and $p_1(t),\dots,p_n(t) \in \Q(t)$ such that 
\[
a = f_{\gamma_1}(p_1(t)) + \dots + f_{\gamma_n}(p_n(t)).
\]
We obtain
\begin{align*}
\lambda_{q_1(t)}(\lambda_{q_2(t)}(a)) &=\lambda_{q_1(t)}\big(\lambda_{q_2(t)}\big( \sum_{i=1}^n f_{\gamma_i}(p_i(t))\big)\\
&= 
\lambda_{q_1(t)}(\sum_{i=1}^n f_{\gamma_i}(q_2(t)\cdot p_i(t)))\\
&=\sum_{i=1}^n f_{\gamma_i}(q_1(t) \cdot (q_2(t)\cdot p_i(t)))\\
&=\lambda_{q_1(t)\cdot q_2(t)}\big(\sum_{i=1}^n f_{\gamma_i}(p_i(t))\big)\\
&=\lambda_{q_1(t) \cdot q_2(t)}(a).
\end{align*}

\end{proof}

\noindent Let $\calL$ be the language of $(\R,<,+,0,1)$, and let $T$ be its theory; that is the theory of ordered divisible abelian groups with a distinguished positive element. It is well-known that $T$ has quantifier-elimination and is o-minimal. We will use various consequences of this fact throughout this section. Most noteworthy: when $\calM\models T$, $X\subseteq M^n$ is $\calL$-definable over $A$ and there is $b=(b_1,\dots,b_n)\in X$ such that $b_1,\dots,b_n$ are $\Q$-linearly independent over $A$, then $X$ has interior.\newline

\noindent Let $\calR_{t}=(\R,<,+,0,1,(\lambda_{q(t)})_{q(t)\in \mathbb{Q}(t)})$ be the expansion of $(\R,<,+,0,1)$ by function symbols for $\lambda_{q(t)}$ where $q(t)\in \Q(t)$. We denote the language of $\calR_t$ by $\calL_{t}$.

\subsection{Density}

Let $p=(p_1(t),\dots,p_n(t))\in \Q(t)^n$. Let $\lambda_p: \R \to \R^n$ denote the function from $\R$ that maps $a$ to $(\lambda_{p_1(t)}(a),\dots \lambda_{p_n(t)}(a))$. The main goal of this subsection is to show the density of the image of $\lambda_p$ when the coordinates of $p$ are $\Q$-linearly independent.
\begin{lemma}\label{lem:idensitiy} Let $p=(p_1(t), \ldots, p_n(t)) \in I$ be such that $p_i(t) \neq p_j(t)$ for $i\neq j$. Then the image of $\lambda_{p}$ is dense in $\R^{n}$.
\end{lemma}

\begin{proof}
Let $J_1, \ldots, J_n$ be open intervals in $\mbb{R}$. Since $p_1(t), \ldots, p_n(t)$ are distinct elements of $I$, there exists $\gamma \in 2^{\aleph_0}$ such that
\[
f_{\gamma}(p_1(t)) \in J_1, \ldots, f_{\gamma}(p_n(t)) \in J_n.
\]
For each $i \in \{1, \ldots, n\}$, we have 
\[
\lambda_{p_i(t)}(f_{\gamma}(\mathbf{1})) = f_{\gamma}(p_i(t))
\]
by definition of $\lambda_{p_i(t)}$. Therefore,
\[
\big(\lambda_{p_1(t)}(f_{\gamma}(\mathbf{1})),\ldots, \lambda_{p_n(t)}(f_{\gamma}(\mathbf{1}))\big) \in J_1 \times J_2 \times \ldots \times J_n.
\]
\end{proof}

\begin{prop}\label{prop:density}
Let $q=(q_1(t),\dots,q_m(t))\in \Q(t)^m$ be such that $q_1(t),\dots,q_m(t)$ are $\Q$-linearly independent. Then the image of $\lambda_q$ is dense in $\R^n$.
\end{prop}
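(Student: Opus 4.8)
The goal is to extend Lemma~\ref{lem:idensitiy}, which handles tuples of \emph{distinct basis elements} of $I$, to arbitrary $\Q$-linearly independent tuples $q=(q_1(t),\dots,q_m(t))$ from $\Q(t)$. The natural plan is to reduce the general case to the basis case via a linear change of coordinates, exploiting that $\lambda$ is $\Q$-linear in its first argument (since each $f_\gamma$ is $\Q$-linear and $\lambda_{q(t)}$ is defined coordinatewise through the $f_\gamma$'s).

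\subsection*{Proposed approach}

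First I would express each $q_i(t)$ in terms of the basis $I$ of $\Q(t)$ over $\Q$. Since $q_1(t),\dots,q_m(t)$ are $\Q$-linearly independent, their $I$-expansions jointly involve some finite set of pairwise distinct basis elements $p_1(t),\dots,p_n(t)\in I$, and there is an $m\times n$ rational matrix $M=(c_{ij})$ of rank $m$ with
\[
q_i(t)=\sum_{j=1}^n c_{ij}\,p_j(t),\qquad i=1,\dots,m.
\]
By $\Q$-linearity of $\lambda$ in the scalar argument (which follows from the vector-space axioms established in Proposition~\ref{prop:qtvs}, or directly from $\Q$-linearity of the $f_\gamma$), for every $a\in\R$ we get $\lambda_{q_i(t)}(a)=\sum_{j=1}^n c_{ij}\,\lambda_{p_j(t)}(a)$, so that $\lambda_q(a)=M\cdot\lambda_p(a)$ where $p=(p_1(t),\dots,p_n(t))$ and $M$ acts as the associated linear map $\R^n\to\R^m$.

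Next I would invoke Lemma~\ref{lem:idensitiy}: because $p_1(t),\dots,p_n(t)$ are pairwise distinct elements of $I$, the image of $\lambda_p$ is dense in $\R^n$. The map $x\mapsto Mx$ from $\R^n$ to $\R^m$ is linear and surjective (as $M$ has rank $m$), hence it is an open continuous map. Therefore it sends the dense set $\im(\lambda_p)$ to a dense subset of $\R^m$, and since $\im(\lambda_q)=M\bigl(\im(\lambda_p)\bigr)$, the image of $\lambda_q$ is dense in $\R^m$. (I note that the proposition as stated writes $\R^n$ for the target; this should read $\R^m$, the ambient space of the $m$-tuple $\lambda_q$.)

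\subsection*{Main obstacle}

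The only genuine point requiring care is the surjectivity and openness of $x\mapsto Mx$, i.e.\ confirming that the $\Q$-linear independence of the $q_i(t)$ forces $M$ to have full row rank $m$ over $\Q$ (equivalently over $\R$). This is immediate: a nontrivial $\Q$-linear dependence among the rows of $M$ would yield the same dependence among the $q_i(t)$ after pairing with the linearly independent $p_j(t)$, contradicting the hypothesis. Once full rank is secured, the density transfer is a standard fact about surjective linear maps preserving density, so I expect no serious difficulty beyond bookkeeping the change of basis.
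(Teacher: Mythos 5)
Your proposal is correct and follows essentially the same route as the paper: expand the $q_i(t)$ over the basis $I$, observe that $\Q$-linear independence gives a rank-$m$ rational matrix, and transport density from Lemma~\ref{lem:idensitiy} through the surjective linear map (the paper phrases the tuple as $(\mathbf{1},p_1(t),\dots,p_n(t))$ with non-constant $p_i$, but this is cosmetic). If anything, your justification is slightly more careful than the paper's, since you correctly invoke openness of a surjective linear map rather than the paper's loose claim that continuous surjections preserve density, and you rightly note the typo $\R^n$ for $\R^m$ in the statement.
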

\begin{proof} Let $n \in \N$, let $p_1(t),\ldots,p_n(t) \in I$ be distinct non-constant, and let $A=(u_{i,j})_{i=1,\dots,m,j=0,\dots,n}$ be an $m \times (n+1)$ matrix with rational entries such that
\begin{align*}
    q_1(t) &= u_{1,0} \mathbf{1} + u_{1,1} p_1(t) + \ldots + u_{1,n}p_n(t) \\
    q_2(t) &= u_{2,0} \mathbf{1} + u_{2,1} p_1(t) + \ldots + u_{2,n}p_n(t)\\
    &\vdots \\
    q_m(t) &= u_{m,0} \mathbf{1} + u_{m,1} p_1(t) + \ldots + u_{m,n}p_n(t).
\end{align*}
By definition of $\lambda_{\mathbf{1}}, \lambda_{p_1(t)}, \ldots, \lambda_{p_n(t)}$, we have for each $i \in \{1, \ldots, m\}$
\[
\lambda_{q_i(t)}(x) = u_{i,0} \lambda_{\mathbf{1}}(x) + u_{i,1} \lambda_{p_1(t)}(x) + \ldots + u_{i,n} \lambda_{p_n(t)}(x).
\]
Therefore,
\[
A \lambda_{(\mathbf{1},p_1(t),\ldots,p_n(t))} = \lambda_{(q_1(t),\ldots,q_m(t))}.
\]
Since $q_1(t),\ldots,q_m(t)$ are $\Q$-linearly independent, the matrix $A$ has rank $m$. Thus multiplication by $A$ is a surjective map from $\R^n$ to $\R^m$. Since matrix multiplication is continuous and continuous surjections preserve density, the image of $A \lambda_{(\mathbf{1},p_1(t),\ldots,p_n(t))}$ is dense in $\R^m$ by Lemma \ref{lem:idensitiy}.
\end{proof}

\subsection{Axiomatization and QE}
In this subsection, we will find an axiomatization of $\calR_t$ and show that this theory has quantifier elimination. Indeed, we will prove that the following subtheory of the $\calL_t$-theory of $\calR_t$ already has quantifier-elimination. 

\begin{defn} \label{axioms}
Let $T_t$ be the $\calL_t$-theory extending $T$ by axiom schemata stating that for every model $\calM=(M,<,+,0,1,(\lambda_{q(t)})_{q(t)\in \Q(t)})\models T_t$
\begin{itemize}
    \item[\emph{(T1)}] $(M,+,0,(\lambda_{q(t)})_{q(t)\in \Q(t)})$ is a $\Q(t)$-vector space.
    \item[\emph{(T2)}] If $q_1(t), \ldots ,q_m(t) \in \Q(t)$ are $\Q$-linearly independent, then the image of the  $\lambda_{(q_1(t),\dots, q_m(t))}$ is dense in $M^m$.
\end{itemize}
\end{defn}

\noindent By Proposition \ref{prop:qtvs} and Proposition \ref{prop:density} we know that $\calR_t\models T_t$. Let $\calM\models T_t$. We observe that by (T1) the $\calL_t$-substructures of $\calM$ are precisely the $\Q(t)$-linear subspaces of $\calM$ containing $1$.

\begin{lemma}\label{lem:indep} Let $\calM\models T_t$ and let $\calA$ be an $\calL_t$-substructure of $\calM$. Let $b\in M\setminus A$ and let $p_1(t),\dots, p_n(t)\in \Q(t)$ be $\Q$-linearly independent. Then $\lambda_{p_1(t)}(b),\dots, \lambda_{p_n(t)}(b)$ are $\Q$-linearly independent over $A$.
\end{lemma}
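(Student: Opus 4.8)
The plan is to show that the lemma is a purely algebraic consequence of the $\Q(t)$-vector space axiom (T1); the density axiom (T2) plays no role. First I would pin down the conclusion. By the observation preceding the lemma, $\calA$ is a $\Q(t)$-linear subspace of $\calM$ containing $1$, so in particular its underlying set $A$ is a $\Q$-subspace. Hence asserting that $\lambda_{p_1(t)}(b),\dots,\lambda_{p_n(t)}(b)$ are $\Q$-linearly independent over $A$ amounts to: whenever $c_1,\dots,c_n\in\Q$ satisfy $\sum_{i}c_i\lambda_{p_i(t)}(b)\in A$, all the $c_i$ vanish. I would argue by contradiction, fixing such a relation in which not all $c_i$ are zero.

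Next I would record the compatibility between the $\Q(t)$-scalar action and the ambient group. Since $T$ is the theory of ordered divisible abelian groups, the additive group of $\calM$ is torsion-free and divisible, hence carries a unique $\Q$-vector space structure; the restriction of $\lambda$ to $\Q\subseteq\Q(t)$ must agree with it, so $c\cdot x=\lambda_c(x)$ for every $c\in\Q$ and $x\in M$. Using this together with the scalar-associativity and scalar-additivity axioms contained in (T1), I would rewrite the offending combination as
\[
\sum_{i=1}^n c_i\,\lambda_{p_i(t)}(b)=\sum_{i=1}^n \lambda_{c_i p_i(t)}(b)=\lambda_{q(t)}(b),
\qquad\text{where } q(t)=\sum_{i=1}^n c_i\,p_i(t).
\]
Because $p_1(t),\dots,p_n(t)$ are $\Q$-linearly independent and the $c_i$ are not all zero, $q(t)\neq 0$ in $\Q(t)$.

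Finally I would exploit invertibility of nonzero scalars. As $q(t)\neq 0$, its inverse $q(t)^{-1}$ lies in $\Q(t)$, and $A$ is closed under $\lambda_{q(t)^{-1}}$; applying this map to $\lambda_{q(t)}(b)\in A$ yields
\[
b=\lambda_{q(t)^{-1}q(t)}(b)=\lambda_{q(t)^{-1}}\big(\lambda_{q(t)}(b)\big)\in A,
\]
contradicting $b\in M\setminus A$. I do not expect a serious obstacle here; the only points demanding care are the identification of the $\Q$-action with the canonical divisible-group structure (so that ordinary $\Q$-linear independence can be transported through $\lambda$) and the recognition that invertibility of the nonzero scalar $q(t)$ is precisely what converts the assumed dependence into the forbidden conclusion $b\in A$.
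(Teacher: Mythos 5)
Your proof is correct and is essentially the paper's own argument: the paper likewise collapses the combination $\sum_i u_i\lambda_{p_i(t)}(b)$ to $\lambda_{q(t)}(b)$ with $q(t)=\sum_i u_ip_i(t)\neq 0$, and uses that $\calA$ is a $\Q(t)$-subspace (i.e.\ closure under $\lambda_{q(t)^{-1}}$) to conclude $\lambda_{q(t)}(b)\notin A$ since $b\notin A$. You merely make explicit two steps the paper leaves implicit --- the invertibility argument behind ``$\lambda_{q(t)}(b)\notin A$ for nonzero $q(t)$'' and the identification of $\lambda$ restricted to $\Q$ with the canonical $\Q$-action on the divisible torsion-free group --- both of which are correct and worthwhile observations.
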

\begin{proof}
Since $\calA$ is a $\Q(t)$-linear subspace of $\calM$, we know that $\lambda_{q(t)}(b)\notin A$ for all non-zero $q(t)\in \Q(t)$. Let $u_1,\dots,u_n\in \Q$. Since $\calM$ is a $\Q(t)$-vector space,
\[
u_1 \lambda_{p_1(t)}(b) + \dots + u_n \lambda_{p_n(t)}(b) = \lambda_{u_1p_1(t)+ \dots + u_n p_n(t)}(b).
\] 
Because $p_1(t),\dots, p_n(t)\in \Q(t)$ are $\Q$-linearly independent and $b\notin A$, we get that 
\[
u_1 \lambda_{p_1(t)}(b) + \dots + u_n \lambda_{p_n(t)}(b) \in A \Rightarrow u_1=\dots = u_n = 0.  
\]
Thus $\lambda_{p_1(t)}(b),\dots, \lambda_{p_n(t)}(b)$ are $\Q$-linearly independent of $A$.
\end{proof}

\begin{prop}\label{prop:qe}
The theory $T_{t}$ has quantifier elimination.
\end{prop}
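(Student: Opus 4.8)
The plan is to prove quantifier elimination via the standard back-and-forth/amalgamation criterion: it suffices to show that whenever $\calM, \calN \models T_t$ with $\calN$ sufficiently saturated, $\calA$ is a common $\calL_t$-substructure, and $h:\calA \to \calN$ is an $\calL_t$-embedding, then for any single $b \in M \setminus A$ the partial embedding extends to the substructure generated by $\calA$ and $b$. Since $T \subseteq T_t$ is o-minimal with QE, and (by the remark following Proposition \ref{prop:qtvs}) the $\calL_t$-substructures of $\calM$ are exactly the $\Q(t)$-linear subspaces containing $1$, the substructure generated by $\calA \cup \{b\}$ is the $\Q(t)$-span $\calA + \lambda_{\Q(t)}(b)$. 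So the task reduces to finding a suitable image for $b$ in $\calN$ and checking that $h$ extends to an $\calL_t$-isomorphism of these spans.

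\emph{The key steps, in order.} First I would reduce to extending by one element $b$ and observe that, by Lemma \ref{lem:indep}, the elements $\{\lambda_{p(t)}(b) : p(t) \in \Q(t)\}$ span a space that is $\Q$-linearly independent over $A$; concretely, picking a $\Q$-basis $\{p_i(t)\}$ of $\Q(t)$, the family $\{\lambda_{p_i(t)}(b)\}$ is $\Q$-independent over $A$. Second, I would use the density axiom (T2) in the saturated model $\calN$ to locate a target. The $\calL_t$-type we must realize over $h(A)$ is controlled by $T$, so by o-minimal QE it is determined by the ordered $\Q$-vector-space relations of the coordinates $\lambda_{p_i(t)}(b)$ relative to $h(A)$. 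Since these coordinates are $\Q$-independent over $A$, the corresponding system of order-inequalities describes a nonempty open region in each finite coordinate projection; by (T2), the image of the relevant $\lambda_q$ in $\calN$ is dense, hence meets this open region, and by saturation I can choose $b' \in N$ realizing the full type, i.e.\ with $\lambda_{p_i(t)}(b')$ occupying the same cut over $h(A)$ for every $i$. Third, I would define the extension of $h$ by $b \mapsto b'$, extend $\Q(t)$-linearly, and verify it is a well-defined $\calL_t$-embedding: additivity and commutation with each $\lambda_{q(t)}$ hold by construction since the map is $\Q(t)$-linear, and order is preserved because $b'$ was chosen to realize the correct $\calL$-type.

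\emph{The main obstacle} I anticipate is Step~2: showing that the order-type over $h(A)$ imposed by the image of $b$ is genuinely \emph{realizable} in $\calN$. The subtlety is that infinitely many coordinates $\lambda_{p(t)}(b)$ (one for each $p(t) \in \Q(t)$) must be placed simultaneously and consistently into the cuts over $h(A)$, whereas the density axiom (T2) only speaks about finitely many $\Q$-independent coordinates at a time. I would handle this by a finitary-plus-compactness argument: any $\calL_t$-formula in $\tp_{\calL_t}(b \mid A)$ involves only finitely many symbols $\lambda_{q(t)}$, hence only finitely many coordinates; for each such finite set I use (T2) and density to witness consistency in $\calN$, and then saturation of $\calN$ (or a standard compactness argument over the finitely generated pieces) yields a single $b'$ realizing the whole type. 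Care is needed to confirm that the $\calL$-reduct of the type is finitely satisfiable and that the cuts cut out by $h(A)$ are exactly what density fills; once that is in place, the o-minimal QE of $T$ does the remaining work automatically.
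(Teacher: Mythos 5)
Your proposal is correct and takes essentially the same route as the paper's proof: a one-point extension test in a saturated model, where Lemma \ref{lem:indep} together with the observation that an $\calL$-definable set containing a point with $\Q$-linearly independent coordinates over the parameters has interior shows that each finite fragment of the $\calL$-type of the tuple $\big(\lambda_{p(t)}(b)\big)_{p(t)\in I}$ is realized via (T2), after which saturation yields a single image point and the embedding extends $\Q(t)$-linearly. The finitary-plus-saturation handling of the infinitely many coordinates, which you flag as the main obstacle, is exactly how the paper argues.
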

\begin{proof}
Let $\calM, \calN \models T_{t}$ be such that $\calN$ is $|\calM|^+$-saturated. Let $\calA \subseteq \calM$ be a substructure that embeds into $\calN$ via the embedding $h: \calA \into \calN$. Let $b\in M\setminus A$. To prove quantifier elimination of $T_t$, it is enough to show that the embedding $h$ extends to an embedding of the $\calL_t$-substructure generated by $\calA$ and $b$.\newline

\noindent Consider the tuple $(\lambda_{p(t)}(b))_{p(t) \in I}$.  We first find $c\in N$ such that
\begin{equation}\label{eq:qetypeeq}
h\tp_{\calL}\big( (\lambda_{p(t)}(b))_{i \in I} | A\big) = tp_{\calL}\big( (\lambda_{p(t)}(c))_{i \in I} | h(A)\big)
\end{equation}
By saturation of $\calN$ it is enough to show that for every 
\begin{itemize}
    \item pairwise distinct $p_1(t),\dots,p_n(t)\in I$,
    \item $\calL$-formula $\psi(x,y)$ and $a \in A^{|y|}$ such that 
    \[
    \calM \models \psi\big(\lambda_{p_1(t)}(b),\dots, \lambda_{p_n(t)}(b),a\big),
    \]
\end{itemize}
there is $c\in N\setminus h(A)$ such that 
\[ 
\calN \models \psi\big(\lambda_{p_1(t)}(c),\dots, \lambda_{p_n(t)}(c),h(a)\big).
\]
Because $I$ is a $\Q$-linear basis of $\Q(t)$, the sequence $(\lambda_{p(t)}(b))_{i \in I}$ is $\Q$-linear independent over $A$ by Lemma \ref{lem:indep}. Thus
the set 
\[
\{ d \in N^n \ : \ \calN \models \psi(d,h(a))\}
\]
has interior. The existence of $c$ now follows from (T2) and saturation of $\mathcal{N}$.\newline

\noindent Let $c\in N$ be such that \eqref{eq:qetypeeq} holds. Let $\mathcal{X}$ be the $\Q$-linear subspace of $\calM$ generated by $(\lambda_{p(t)}(b))_{i \in I}$ and $\calA$. Let $\mathcal{Y}$  be the $\Q$-linear subspace of $\calN$ generated by $(\lambda_{p(t)}(c))_{i \in I}$ and $h(\calA)$. Observe that $\mathcal{X}$ is the $\Q(t)$-subspace of $\calM$ generated by $b$ and $\calA$, and $\mathcal{Y}$ is the $\Q(t)$-subspace of $\calN$ generated by $c$ and $h(\calA)$. Hence $\mathcal{X}$ and $\mathcal{Y}$ are $\calL_t$-structures 
of $\calM$ and $\calN$ respectively. Since $c$ satisfies \eqref{eq:qetypeeq}, there is an $\calL$-isomorphism $h': \mathcal{X} \to \calC$ extending $h$ and mapping $\lambda_{p(t)}(b)$ to $\lambda_{p(t)}(c)$ for each $p(t)\in I$. It follows easily that this $h'$ is $\Q(t)$-linear and hence an $\calL_t$-isomorphism extending $h$.
\end{proof}
\begin{cor}\label{cor:type}
Let $\calM, \calN \models T_t$, let $\calA\subseteq \calM$ be an $\calL_t$-substructure such that $h: A \into \calN$ is an $\calL_t$-embedding. Let $b\in M\setminus A$ and $c\in N\setminus h(A)$ such that
\[
h\tp_{\calL}(\big(\lambda_{p(t)}(b)\big)_{p(t)\in I}|A) = \tp_{\calL}(\big(\lambda_{p(t)}(c)\big)_{p(t)\in I}|h(A)).
\]
Then $\tp_{\calL_t}(b|A) = \tp_{\calL_t}(c|h(A))$.
\end{cor}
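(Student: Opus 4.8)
The plan is to deduce the corollary directly from the quantifier elimination of Proposition~\ref{prop:qe}. The underlying observation is that, in the single variable $b$, every quantifier-free $\calL_t$-formula with parameters from $A$ is equivalent to an $\calL$-formula in the tuple $(\lambda_{p(t)}(b))_{p(t)\in I}$ with parameters from $A$; matching the $\calL$-type of this tuple (via $h$) will then force matching of the quantifier-free $\calL_t$-type of $b$, which by quantifier elimination is the full $\calL_t$-type.

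First I would record the term reduction. Since $\calA$ is a $\Q(t)$-subspace of $\calM$ and $\lambda$ is the $\Q(t)$-scalar multiplication, any $\calL_t$-term evaluated at $b$ over $A$ collapses, using the vector-space identities from (T1) (in particular additivity and $\lambda_{q_1(t)}(\lambda_{q_2(t)}(x))=\lambda_{q_1(t)q_2(t)}(x)$), to an expression of the form $\lambda_{q(t)}(b)+a'$ for some $q(t)\in\Q(t)$ and $a'\in A$. Writing $q(t)=\sum_{p(t)\in I}u_{p}\,p(t)$ as a finite $\Q$-linear combination in the basis $I$ and using $\Q$-linearity of $\lambda$, we obtain $\lambda_{q(t)}(b)=\sum_{p(t)}u_{p}\,\lambda_{p(t)}(b)$. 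Hence every $\calL_t$-term in $b$ over $A$ is an $\calL$-term in finitely many coordinates of $(\lambda_{p(t)}(b))_{p(t)\in I}$ together with parameters from $A$.

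Next, since the atomic $\calL$-formulas are equalities and inequalities of terms, the reduction shows that each atomic, and hence each quantifier-free, $\calL_t$-formula $\varphi(b,a)$ with $a\in A$ is equivalent in $\calM$ to some $\calL$-formula $\psi\big((\lambda_{p(t)}(b))_{p(t)\in I},a\big)$, and the \emph{same} $\psi$ computes the truth of the corresponding formula at $c$ in $\calN$. The hypothesis $h\tp_{\calL}\big((\lambda_{p(t)}(b))_{p(t)\in I}| A\big)=\tp_{\calL}\big((\lambda_{p(t)}(c))_{p(t)\in I}| h(A)\big)$ says precisely that $\calM\models\psi\big((\lambda_{p(t)}(b))_{p(t)},a\big)$ if and only if $\calN\models\psi\big((\lambda_{p(t)}(c))_{p(t)},h(a)\big)$ for all such $\psi$ and $a$. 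Thus $b$ and $c$ realize the same quantifier-free $\calL_t$-type over $A$ and $h(A)$ under $h$, and Proposition~\ref{prop:qe} upgrades this to equality of the full $\calL_t$-types.

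The step requiring the most care is the term reduction: one must verify that an arbitrary nesting of the $\lambda_{q(t)}$ and of $+$ genuinely collapses by means of the $\Q(t)$-vector space axioms (T1), and that only finitely many basis elements $p(t)\in I$ occur, so that the resulting $\calL$-formula has finite arity and lies within the scope of the type hypothesis. The remaining steps are bookkeeping together with the appeal to quantifier elimination.
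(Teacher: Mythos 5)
Your proof is correct and takes essentially the same route as the paper: the paper packages your syntactic term collapse semantically, extending $h$ to an $\calL$-isomorphism between the $\Q$-linear spans of $A$ with $(\lambda_{p(t)}(b))_{p(t)\in I}$ and of $h(A)$ with $(\lambda_{p(t)}(c))_{p(t)\in I}$ (which are exactly the generated $\calL_t$-substructures, by the same (T1)-identities you use), and then invokes Proposition~\ref{prop:qe} just as you do. The only point to polish is that rational coefficients $u_p$ do not literally yield $\calL$-terms, so in the reduction one should clear denominators (integer multiples are terms, and the group is divisible and torsion-free), which is routine bookkeeping.
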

\begin{proof}
Let $\mathcal{X}$ be the $\Q$-linear subspace of $\calM$ generated by $(\lambda_{p(t)}(b))_{i \in I}$ and $\calA$, and let $\mathcal{Y}$  be the $\Q$-linear subspace of $\calN$ generated by $(\lambda_{p(t)}(c))_{i \in I}$ and $h(\calA)$. It is easy to check that $\mathcal{X}$ and $\mathcal{Y}$ are $\calL_t$-substructures of $\calM$ and $\calN$ respectively. By our assumption on $b$ and $c$, the embedding $h$ extends on an $\mathcal{L}$-isomorphism $h'$ between $\mathcal{X}$ and $\mathcal{Y}$ mapping $\lambda_{p(t)}(b)$ to $\lambda_{p(t)}(c)$ for each $p(t)\in I$. Since $h$ is an $\calL_t$-embedding, it follows easily that $h'$ is an $\calL_t$-isomorphism between $\mathcal{X}$ to $\mathcal{Y}$. Since $T_t$ has quantifier elimination, we get that $\tp_{\calL_t}(b|A) = \tp_{\calL_t}(c|h(A))$.
\end{proof}

\begin{prop}
The theory of $\calR_t$ is axiomatized by the theory $T_t$ in conjunction with the axiom scheme that specifies $\tp_{\calL}(\big(\lambda_{p(t)}(1)\big)_{p(t)\in I})$.
\end{prop}

\begin{proof}
Let $T_t^*$ be the theory described in the statement. Since $\calR_t\models T_t$, we immediately get that $\calR_t \models T_t^*$. It is left to show that $T_t^*$ is complete. Let $\calM$ and $\calN$ be model of $T_t^*$ of size $\kappa >\aleph_0$. By Corollary \ref{cor:type}, $1_{\calM}$ and $1_{\calN}$ satisfy the same $\calL_t$-type.
Thus there is an $\calL_t$-isomorphism $h$ mapping the $\calL_t$-substructure of $\calM$ generated by $1_{\calM}$ to the $\calL_t$-substructure of $\calN$ generated by $1_{\calN}$. By the proof of Proposition \ref{prop:qe} this isomorphism $h$ extends to an $\calL_t$-isomorphism between $\calM$ and $\calN$. 
\end{proof}

\subsection{Exchange property}

In this subsection we establish that every model of $T_t$ has the exchange property. We will do so by showing that the definable closure in such a model is equal to the $\Q(t)$-linear span.

\begin{lemma}
\label{cor:defclosed}
Let $\mathcal{M} \models T_t$ and let $\calA\subseteq \calM$ be an $\calL_t$-substructure. Then $\calA$ is definably closed.
\end{lemma}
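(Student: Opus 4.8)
The plan is to show that $\dcl_{\calL_t}(A) = A$ for any $\calL_t$-substructure $\calA \subseteq \calM$, which by the observation following Definition \ref{axioms} means showing that the $\Q(t)$-linear span (together with $1$) is already definably closed. The natural strategy is to fix $b \in M \setminus A$ and produce an $\calL_t$-automorphism of an elementary extension that fixes $A$ pointwise but moves $b$; the existence of such an automorphism witnesses $b \notin \dcl_{\calL_t}(A)$, and since $b$ was arbitrary outside $A$, this proves $\dcl_{\calL_t}(A) \subseteq A$ (the reverse inclusion is trivial).

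First I would pass to a sufficiently saturated elementary extension $\calN \succeq \calM$ and take $h$ to be the identity embedding of $\calA$ into $\calN$. By Corollary \ref{cor:type}, it suffices to find $c \in N \setminus A$ with
\[
\tp_{\calL}\big((\lambda_{p(t)}(b))_{p(t) \in I} \,\big|\, A\big) = \tp_{\calL}\big((\lambda_{p(t)}(c))_{p(t) \in I} \,\big|\, A\big),
\]
since then $b$ and $c$ have the same $\calL_t$-type over $A$, and by homogeneity of $\calN$ there is an automorphism fixing $A$ and sending $b \mapsto c$, giving $b \notin \dcl(A)$. So the task reduces to finding a \emph{distinct} realization $c$ of the relevant $\calL$-type in the underlying o-minimal reduct.

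The key step, exactly as in the proof of Proposition \ref{prop:qe}, is to realize this $\calL$-type by a single point fed through the $\lambda_{p(t)}$'s. By Lemma \ref{lem:indep}, for any finite tuple of pairwise distinct $p_1(t), \dots, p_n(t) \in I$, the coordinates $\lambda_{p_1(t)}(b), \dots, \lambda_{p_n(t)}(b)$ are $\Q$-linearly independent over $A$; hence any $\calL$-definable set over $A$ containing a point of the form $(\lambda_{p_1(t)}(b), \dots, \lambda_{p_n(t)}(b))$ must have interior, by the o-minimality fact recorded just before the definition of $\calR_t$. Axiom (T2) then guarantees that the image of $\lambda_{(p_1(t), \dots, p_n(t))}$ is dense in $N^n$, so by a finite-satisfiability/saturation argument I can find a single $c \in N$ whose image under all the $\lambda_{p(t)}$ simultaneously realizes the required type over $A$. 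The point $c$ can be chosen in $N \setminus A$: since $b \notin A$ and $\calA$ is a $\Q(t)$-subspace, $\lambda_{p(t)}(b) \notin A$ for nonzero $p(t)$, and this non-membership is captured by the $\calL$-type (it asserts $\Q$-linear independence over $A$), forcing $c \notin A$ as well.

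The main obstacle I anticipate is not in the existence of $c$ — that is essentially a rerun of Proposition \ref{prop:qe} — but in cleanly extracting the definable-closure conclusion from type-equality while ensuring $c \neq b$ (equivalently $c \notin A$), and in confirming that a single automorphism moving $b$ suffices. Because the whole argument routes through Corollary \ref{cor:type}, almost all of the heavy lifting has already been done, so the proof should be short: assert saturation, invoke Lemma \ref{lem:indep} and (T2) to produce a second realization $c \neq b$, apply Corollary \ref{cor:type} to conclude $\tp_{\calL_t}(b|A) = \tp_{\calL_t}(c|A)$, and read off $b \notin \dcl(A)$.
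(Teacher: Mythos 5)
Your proposal is correct and takes essentially the same route as the paper's proof: assume sufficient saturation, reduce via Corollary \ref{cor:type} to producing a second realization of the $\calL$-type of $\big(\lambda_{p(t)}(b)\big)_{p(t)\in I}$ over $A$, and obtain it from Lemma \ref{lem:indep} (which forces interior) together with axiom (T2) (density of the image of $\lambda_{(p_1(t),\dots,p_m(t))}$). Two minor remarks: the automorphism/homogeneity detour is unnecessary, since any realization $c\neq b$ of $\tp_{\calL_t}(b|A)$ already witnesses $b\notin\dcl(A)$ directly; and your parenthetical ``equivalently $c\notin A$'' has it backwards --- $c\notin A$ is automatic because the type contains $x\neq a$ for every $a\in A$, while $c\neq b$ is the substantive condition, which your density-plus-saturation argument does deliver since each formula in the type has infinitely many realizations of the form $\lambda_p(c)$.
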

\begin{proof}
Without loss of generality, we can assume that $\mathcal{M}$ is $|A|^+$-saturated. 
Let $b\in M\setminus A$. It is enough to show that there exists $c\in M$ such that $b\neq c$ and $\tp_{\calL_t}(b|A) = \tp_{\calL_t}(c|A)$.  
By Corollary \ref{cor:type} it is sufficient to find $c\in M$ such that $b\neq c$ and
\[
\tp_{\calL}(\big(\lambda_{p(t)}(b)\big)_{p(t)\in I}|A) = \tp_{\calL}(\big(\lambda_{p(t)}(c)\big)_{p(t)\in I}|A).
\]
Let $\varphi(x,y)$ be an $\calL$-formula, $p_1(t),\dots,p_m(t)\in I$ and $a\in A^n$ such that
\[
\calM \models \varphi(\lambda_{p_1(t)}(b),\dots,\lambda_{p_m(t)}(b),a).
\]
By saturation of $\calM$, we only need to find $c\in M$ such that $c\neq b$ and $\calM \models
\varphi(\lambda_{p_1(t)}(b),\dots,\lambda_{p_m(t)}(b),a)$. 
By Lemma \ref{lem:indep}, $\big(\lambda_{p(t)}(b)\big)_{p(t)\in I}$ is $\Q$-linear independent over $A$. Thus the set
\[
X := \{ d \in M^m \ : \ \calM \models \varphi(d,a)\}
\]
has interior. By axiom (T2) the intersection  
\[
\{ (\lambda_{p_1(t)}(c),\dots,\lambda_{p_m(t)}(c)) \ : \ c \in M\}\cap X
\]
is dense in $X$.
\end{proof}

\begin{cor}\label{cor:defvec}
Let $\mathcal{M}\models T_t$ and let $Z\subseteq M$. Then
the $\calL_t$-definable closure of $Z$ is the $\Q(t)$-subspace of $\calM$ generated by $Z$.
\end{cor}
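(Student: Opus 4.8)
The plan is to prove the two inclusions separately, both of which follow quickly from Lemma \ref{cor:defclosed} together with the description of $\calL_t$-substructures recorded after Definition \ref{axioms}. Let $\calA$ denote the $\calL_t$-substructure of $\calM$ generated by $Z$; by that description this is exactly the $\Q(t)$-subspace of $\calM$ generated by $Z$ (together with $1$, which every substructure must contain). The goal is then to show $\dcl(Z) = \calA$.

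For the inclusion $\calA \subseteq \dcl(Z)$, I would observe that every element of $\calA$ is a finite sum $\lambda_{q_1(t)}(z_1) + \dots + \lambda_{q_k(t)}(z_k)$ with $z_1, \dots, z_k \in Z \cup \{1\}$ and $q_1(t), \dots, q_k(t) \in \Q(t)$. Each $\lambda_{q_i(t)}$ is a function symbol of $\calL_t$, the symbols $+$, $0$, $1$ belong to $\calL_t$, and $1 \in \dcl(\emptyset) \subseteq \dcl(Z)$; hence such a sum is $\calL_t$-definable over $Z$ and therefore lies in $\dcl(Z)$.

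For the reverse inclusion $\dcl(Z) \subseteq \calA$, I would invoke Lemma \ref{cor:defclosed}: since $\calA$ is an $\calL_t$-substructure of $\calM$, it is definably closed, i.e.\ $\dcl(\calA) = \calA$. As $Z \subseteq \calA$, monotonicity of the definable-closure operator yields $\dcl(Z) \subseteq \dcl(\calA) = \calA$, completing the proof.

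The only subtlety, rather than a genuine obstacle, is the bookkeeping around the constant $1$: the $\Q(t)$-span of $Z$ need not be an $\calL_t$-substructure unless it contains $1$, so one must pass to the substructure generated by $Z$ (equivalently, the span of $Z \cup \{1\}$) in order to apply Lemma \ref{cor:defclosed}. Once this is observed, everything reduces to the already-established fact that substructures are definably closed, and no further work is needed.
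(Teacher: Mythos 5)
Your proof is correct and takes essentially the same route as the paper: the paper's proof is precisely the observation that by Lemma \ref{cor:defclosed} the definable closure of $Z$ is the $\calL_t$-substructure generated by $Z$, which by the remark following Definition \ref{axioms} is the $\Q(t)$-span. Your point about the constant $1$ is well taken --- the paper silently identifies ``the $\Q(t)$-subspace generated by $Z$'' with the span of $Z \cup \{1\}$ (as it must, since $1 \in \dcl(\emptyset)$), so your version is if anything slightly more careful.
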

\begin{proof}
By Lemma \ref{cor:defclosed} the definable closure of $Z$ is the $\calL_t$-substructure generated by $Z$. However, the latter is just the $\Q(t)$-subspace of $\calM$ generated by $Z$.
\end{proof}

\noindent The exchange property for $T_t$ follows immediately from Corollary \ref{cor:defvec} and the classical Steinitz exchange lemma for vector spaces.

\begin{prop}\label{cor:EP}
The theory $T_t$ has the exchange property.
\end{prop}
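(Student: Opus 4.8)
The plan is to deduce the exchange property directly from the two results established just above, namely Corollary~\ref{cor:defvec}, which identifies $\dcl_{\calL_t}$ with the $\Q(t)$-linear span, together with the Steinitz exchange lemma for vector spaces. The key observation is that the abstract exchange property in the sense of model theory is precisely the exchange axiom for the pregeometry associated to $\Q(t)$-linear independence, so once definable closure is recognized as linear span, the combinatorics of linear algebra do all the work.

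Concretely, I would proceed as follows. Fix a model $\calM \models T_t$, a set $S \subseteq M$, and elements $a, b \in M$ with $a \in \dcl(S \cup \{b\}) \setminus \dcl(S)$; the goal is to show $b \in \dcl(S \cup \{a\})$. By Corollary~\ref{cor:defvec}, $\dcl(S \cup \{b\})$ is the $\Q(t)$-subspace $V$ generated by $S \cup \{b\}$, and $\dcl(S)$ is the $\Q(t)$-subspace $W$ generated by $S$. From $a \in V \setminus W$ we may write $a = q(t)\cdot b + w$ for some $q(t) \in \Q(t)$ and some $w$ in the span of $S$ (using the defining vector-space structure $\lambda$); here $q(t) \neq 0$, for otherwise $a = w$ would lie in $W = \dcl(S)$, contradicting $a \notin \dcl(S)$. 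Since $q(t)$ is a nonzero element of the field $\Q(t)$, it is invertible, and solving gives $b = \lambda_{q(t)^{-1}}(a - w) = \lambda_{q(t)^{-1}}(a) - \lambda_{q(t)^{-1}}(w)$. This exhibits $b$ as a $\Q(t)$-linear combination of $a$ and elements of $S$, so $b$ lies in the $\Q(t)$-span of $S \cup \{a\}$, which by Corollary~\ref{cor:defvec} equals $\dcl(S \cup \{a\})$. Hence $b \in \dcl(S \cup \{a\})$, as required.

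I do not expect any serious obstacle here, since the proposition is essentially a translation of a statement about the pregeometry of $\Q(t)$-linear independence into model-theoretic language. The only point requiring a little care is verifying that the scalar $q(t)$ extracted from the linear combination is genuinely nonzero and therefore invertible in the field $\Q(t)$; this is exactly where the hypothesis $a \notin \dcl(S)$ is used, and it is the analogue of the nonvanishing-coefficient step in the usual proof of Steinitz exchange. The invertibility of $q(t)$ crucially relies on $\Q(t)$ being a field rather than merely a ring, which is precisely why the construction is carried out over $\Q(t)$; this feature is what lets the argument run through verbatim as a vector-space exchange.
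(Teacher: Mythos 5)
Your proof is correct and takes exactly the paper's route: the paper's proof of this proposition is a single sentence citing Corollary~\ref{cor:defvec} together with the classical Steinitz exchange lemma, and your argument simply writes out that exchange computation (extracting the nonzero scalar $q(t)$ and inverting it in the field $\Q(t)$) in full detail. No gaps; you have merely made explicit what the paper leaves to the reader.
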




\subsection{Open core}

Let $\calM\models T_t$. Then by Axiom (T2) it defines functions from $M$ to $M$ whose graph is dense. We already know that $\calM$ has EP by Proposition \ref{cor:EP}. To give a negative answer to Question 3, it is left to show that every open subset of $M^n$ definable in $\calM$ is already definable in the reduct $(M,<,+,0,1)$.

\begin{thm}\label{thm:opencorevs}
The theory $T$ is an open core of $T_t$.
\end{thm}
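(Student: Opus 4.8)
The plan is to prove the only nontrivial inclusion: for each $\calM\models T_t$, every set definable in $\calM$ that is \emph{open} is already definable in the o-minimal reduct $\calM_0:=(M,<,+,0,1)$. Since $\calM_0$ is a reduct of $\calM$, the reverse inclusion is immediate, and as $\calM_0$ is o-minimal it is interdefinable with its own open core; hence this identifies $\calM^{\circ}$ with $\calM_0\models T$ and shows that $T$ is an open core of $T_t$. Throughout I will use that in the o-minimal $\calM_0$ the topological closure and interior of a definable set are again definable, and that $\calM_0$ defines scalar multiplication by every rational. I will argue by induction on the ambient dimension $n$, the case $n=0$ being trivial.

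By quantifier elimination (Proposition \ref{prop:qe}) any definable $X\subseteq M^n$ is a Boolean combination of \emph{atomic} sets $\{x:\tau(x)\,\square\,d\}$, where $\square\in\{<,=\}$, $d\in M$ is a parameter, and $\tau(x)=\sum_i\lambda_{q_i(t)}(x_i)$ is a $\Q(t)$-linear form. Call such an atom \textbf{rational} if $(q_1(t),\dots,q_n(t))$ is a $\Q(t)$-multiple of a vector in $\Q^n$, and \textbf{irrational} otherwise. Using that each $\lambda_{q(t)}$ is invertible, a rational atom is either a reduct-definable hyperplane (in the equality case) or a set $\{\lambda_{g(t)}(L(x))\,\square\,d\}$ with $L$ a rational form; this is a genuine reduct-definable half-space when $g(t)\in\Q$, and otherwise dense and codense with empty interior. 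The central topological input is a strengthening of Proposition \ref{prop:density} and Lemma \ref{lem:idensitiy}: \emph{every nonempty sign-pattern cell determined by finitely many irrational forms is dense in $M^n$} (while still having empty interior). Writing all occurring forms through the finitely many $\lambda_p$, $p\in I$, one checks that the map sending $x$ to the tuple of its coordinates together with the values of the forms has image dense in the rational-linear subspace it is forced to lie in, so every consistent open sign condition on the forms is met inside every coordinate box.

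Now fix an open definable $X$. The finitely many rational forms in a disjunctive normal form of $X$ cut $M^n$, via a reduct-definable partition, into finitely many top-dimensional open cells together with a closed, nowhere dense, reduct-definable remainder $H$ (a finite union of rational hyperplanes). On a top-dimensional cell $\rho$ the rational literals are constant, so $X\cap\rho=W_\rho\cap\rho$ for a Boolean combination $W_\rho$ of irrational literals; as $X$ is open, $X\cap\rho$ is open. If $W_\rho\neq M^n$, then the complement of $W_\rho$ contains a nonempty sign-pattern cell, which is dense by the key lemma, forcing $W_\rho$ to have empty interior; hence a nonempty open $X\cap\rho$ forces $W_\rho=M^n$ and $\rho\subseteq X$. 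Letting $Y$ be the union of those top cells contained in $X$, we conclude that $Y$ is reduct-definable and $X\setminus H=Y$.

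It remains to analyze $X\cap H$. Each cell $\rho'$ of $H$ is relatively open in an affine rational subspace; after a reduct-definable affine change of coordinates it is identified with $M^k$ for some $k<n$, and the induced structure is again a model of $T_t$ (axiom (T1) is inherited because rational linear subspaces are $\Q(t)$-subspaces, and (T2) persists since the restricted forms remain dense). Since $X$ is open in $M^n$, the trace $X\cap\rho'$ is relatively open in $\rho'$ and definable in this lower-dimensional model, so by the induction hypothesis it is reduct-definable, hence reduct-definable in $\calM_0$. Assembling the traces shows $X\cap H$ is reduct-definable, and therefore $X=Y\cup(X\cap H)$ is reduct-definable, completing the induction. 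I expect the main obstacle to be the density lemma for sign patterns of irrational forms: one must rule out that finitely many empty-interior irrational cells conspire to fill a box, which is exactly what the joint-density statement prevents. A secondary technical point is the clean verification that the boundary cells carry induced models of $T_t$, so that the induction on dimension is legitimate.
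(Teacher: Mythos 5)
Your overall strategy (quantifier elimination, then a rational hyperplane arrangement plus a density lemma for sign cells of ``irrational'' forms, with induction on dimension for the boundary) is genuinely different from the paper's proof, which is soft: the paper verifies the three conditions of Boxall--Hieronymi \cite[Corollary 3.1]{BH12} for the dense set $D=\{(a,\lambda_{p_1(t)}(a),\dots,\lambda_{p_{n-1}(t)}(a)) \ : \ a\notin\dcl_{\calL_t}(C)\}$ in a saturated model, using Corollary \ref{cor:type} to get the type-determination condition and (T2) plus saturation for the density conditions. But your argument has a genuine gap: the central density lemma is false as stated, because irrational forms can have $\Q$-linear combinations that are rational forms, and these impose reduct-definable open constraints on sign cells. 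Concretely, in $M^2$ take $\tau_1=\lambda_{t}(x_1)+x_2$ and $\tau_2=\lambda_{t}(x_1)+2x_2$; the coefficient vectors $(t,1)$ and $(t,2)$ are both irrational in your sense, yet $\tau_2-\tau_1=x_2$. The cell $\{\tau_1\geq d_1,\ \tau_2\leq d_2\}$ is nonempty but contained in the half-plane $\{x_2\leq d_2-d_1\}$, so it is not dense in $M^2$; dually, the Boolean combination of irrational literals $\{\tau_1<d_1\}\cup\{\tau_2>d_2\}$ contains the open half-plane $\{x_2>d_2-d_1\}$ while being a proper subset of $M^2$. This kills the dichotomy you rely on (``$W_\rho=M^n$ or $W_\rho$ has empty interior'') and hence the conclusion that each top cell is contained in or disjoint from $X$. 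A related slip: atoms $\lambda_{g(t)}(L(x))\,\square\,d$ with $L$ rational and $g(t)\notin\Q$ are ``rational'' under your coefficient-vector classification but are dense and codense rather than half-spaces, so they are not constant on the top cells of your arrangement, contrary to what the cell-decomposition step assumes.

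The repair is real work but available along your lines: close the finite set of occurring forms under the rational forms lying in its $\Q$-linear span and add all of those to the arrangement $H$; then prove the correct relative statement, which your graph-density sketch in fact supports --- the map $x\mapsto(x,\tau_1(x),\dots,\tau_k(x))$ has image dense in the rational-linear subspace $V$ cut out by all rational relations among the coordinates and the forms, so a nonempty strict sign cell is dense in the interior of the rational polyhedron obtained by projecting $V$ intersected with the corresponding open box of values, rather than in all of $M^n$. With the refined arrangement, each top cell lies inside or outside the interior of each such polyhedron, and your dichotomy is restored cell by cell. Your induction step on boundary cells is fine (rational linear subspaces are $\lambda_{q(t)}$-invariant since each $\lambda_{q(t)}$ commutes with rational linear maps, and affine shifts are absorbed into parameters), but as written the proof does not go through without the refinement above.
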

\begin{proof}
Let $\mathcal{M}\models T_t$. We prove that every open set is $\calL$-definable. Without loss of generality, we can assume that $\mathcal{M}$ is $\aleph_0$-saturated.
Let $C$ be a finite subset. Using Boxall and Hieronymi \cite[Corollary 3.1]{BH12}, we will show that for every $n\in \N$ and every subset of $M^n$ that is $\calL_t$-definable over $C$, is also $\calL$-definable over $C$. Let $n\in \N$ and $p_1(t),\dots,p_{n-1}(t)\in I$ be distinct and non-constant. We define
\[
D := \{ \big(a,\lambda_{p_1(t)}(a),\dots,\lambda_{p_{n-1}(t)}(a)\big) \ : \ a \notin \dcl_{\calL_t}(C)\}.
\]
From (T2) and saturation of $\calM$, it follows easily that $D$ is dense in $M^n$. Thus Condition (1) of \cite[Corollary 3.1]{BH12} is satisfied. Condition (3) of \cite[Corollary 3.1]{BH12} holds by Corollary \ref{cor:type}. It is only left to establish Condition (2).\newline

\noindent Let $b\in D$ and $a\notin \dcl_{\calL_t}(C)$ be such that $b=\big(a,\lambda_{p_1(t)}(a),\dots,\lambda_{p_{n-1}(t)}(a)\big)$. Let $U\subseteq M^n$ be open and suppose that $\tp_{\calL}(b|C)$ is realized in $U$. We need to show that $\tp_{\calL}(b|C)$ is realized in $U\cap D$.
By Lemma \ref{lem:indep} we know that the coordinates of $b$ are $\Q$-linearly independent over $\dcl_{\calL_t}(C)$. Thus the set of realizations of $\tp_{\calL}(b|C)$ is open, and so is its intersection with the open set $U$. Denote this intersection by $V$. By (T2) and $\aleph_0$-saturation of $\calM$, we find $a'\notin \dcl_{\calL_t}(C)$ such that $b'=\big(a',\lambda_{p_1(t)}(a'),\dots,\lambda_{p_{n-1}(t)}(a')\big)$. Now $b'$ is the desired realization of $\tp_{\calL}(b|C)$ in $U\cap D$.
\end{proof}

\noindent By Theorem \ref{thm:opencorevs} every model of $T_t$ has o-minimal open core and thus is definably complete.

\subsection{Neostability results}
We will now show that $T_t$ is NIP, but not strong.
We use an equivalent definition of the independence property in the theorem below, namely that in a monster model $\calM$ of $T_t$ there is no formula $\varphi(x, y)$ and no element $a \in M$ such that for some indiscernible sequence $(b_i)_{i<\omega}$ of tuples in $M^{|y|}$ we have 
$$\calM \models \varphi(a, b_i) \hbox{ if and only if }  i<\omega \hbox{ is even}.$$
For a proof that this is equivalent to the classic definition of NIP, see \cite{S15}.

\begin{thm}
Every completion of the theory $T_t$ has \emph{NIP}.
\end{thm}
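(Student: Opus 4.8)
The plan is to establish NIP by using the quantifier-elimination result (Proposition \ref{prop:qe}) to reduce the question about arbitrary definable families to one about families defined by $\calL$-formulas composed with the scalar multiplication maps $\lambda_{p(t)}$, and then to leverage the known NIP-ness of the o-minimal reduct $T$. The essential structural fact is that every $\calL_t$-formula is equivalent modulo $T_t$ to a quantifier-free $\calL_t$-formula, and the atomic $\calL_t$-formulas are built from $<,+,0,1$ applied to terms that are $\Q(t)$-linear combinations of the variables. Thus any $\calL_t$-definable relation $\varphi(x,y)$ can be rewritten in the form $\psi\big((\lambda_{p(t)}(x_i))_{p(t),i}, (\lambda_{p(t)}(y_j))_{p(t),j}\big)$ where $\psi$ is an $\calL$-formula and the $p(t)$ range over a fixed finite subset of $I$ determined by the finitely many rational functions appearing in $\varphi$.

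\textbf{Reduction to the reduct.} First I would fix a monster model $\calM \models T_t$, a formula $\varphi(x,y)$, and suppose toward a contradiction that there is $a \in M$ and an indiscernible sequence $(b_i)_{i<\omega}$ with $\calM \models \varphi(a,b_i)$ iff $i$ is even. Using QE I would replace $\varphi$ by a quantifier-free $\calL_t$-formula and collect the finite set $P = \{p_1(t),\dots,p_k(t)\} \subseteq I$ of basis elements so that $\varphi(x,y)$ becomes an $\calL$-formula $\psi$ in the expanded tuples $\hat{a} = (\lambda_{p(t)}(a))_{p(t)\in P}$ and $\hat{b}_i = (\lambda_{p(t)}(b_{i,j}))_{p(t)\in P, j}$. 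The key observation is that the map sending an element to its tuple of $\lambda$-images is $\Q$-linear, so an $\calL_t$-indiscernible sequence $(b_i)$ produces a sequence of tuples $(\hat{b}_i)$ that is $\calL$-indiscernible in the reduct $(M,<,+,0,1)$. This is where Corollary \ref{cor:type} does the work: $\calL_t$-types are controlled by $\calL$-types of the $\lambda$-expansions, so $\calL_t$-indiscernibility transfers to $\calL$-indiscernibility of the expanded tuples.

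\textbf{Concluding the contradiction.} Once the sequence $(\hat{b}_i)$ is $\calL$-indiscernible and the alternation $\calM \models \psi(\hat{a}, \hat{b}_i) \iff i$ even holds, I would invoke the fact that the o-minimal theory $T$ of divisible ordered abelian groups is NIP. Since $\hat{a}$ is a single $\calL$-parameter tuple and $\psi$ is a single $\calL$-formula, an alternating pattern along an $\calL$-indiscernible sequence contradicts NIP of $T$. This yields the desired contradiction and shows no $\calL_t$-formula has IP, so every completion of $T_t$ is NIP.

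\textbf{The main obstacle} I anticipate is making the indiscernibility transfer fully rigorous: one must check that the reindexing from $(b_i)$ to $(\hat{b}_i)$ respects indiscernibility, which requires that the $\Q$-linear expansion map intertwines $\calL_t$-types with $\calL$-types uniformly in the tuple length. The cleanest route is to cite Corollary \ref{cor:type} to see that $\tp_{\calL_t}$ of a tuple is determined by $\tp_{\calL}$ of its $\lambda$-expansion, so that equality of $\calL_t$-types along the indiscernible sequence forces equality of the relevant $\calL$-types, and thus $\calL$-indiscernibility. A secondary subtlety is that $\varphi$ may involve $\lambda_{q(t)}$ for $q(t)$ not in $I$; here one rewrites each such $\lambda_{q(t)}(x)$ as a fixed $\Q$-linear combination of the $\lambda_{p(t)}(x)$ for $p(t)\in I$, which merely enlarges the finite set $P$ and the $\calL$-formula $\psi$ but does not affect the argument.
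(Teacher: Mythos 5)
Your proof is correct, but it takes a genuinely different route from the paper's. The paper stays entirely inside $T_t$: after quantifier elimination it uses closure of NIP under boolean combinations to reduce to a single atomic formula $a - \sum_{j}\lambda_{q_j(t)}(y_j) = 0$ or $>0$ (implicitly also using that NIP may be tested on formulas with $|x|=1$), and then kills the alternating pattern in two lines by comparing $\tp_{\calL_t}(b_1b_2)$ with $\tp_{\calL_t}(b_2b_3)$: in the equality case $f(b_1)=f(b_3)\neq f(b_2)$, and in the order case $f(b_1)<f(b_2)>f(b_3)$, either of which contradicts indiscernibility directly. You instead transfer the whole IP configuration into the reduct: QE shows every $\calL_t$-formula is an $\calL$-formula evaluated on the $\lambda$-expanded tuples, $\calL_t$-indiscernibility passes to $\calL$-indiscernibility of those expansions, and NIP of the o-minimal theory $T$, cited as a black box, finishes. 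Your route buys uniformity --- it in effect shows that any expansion whose quantifier-free formulas factor through reduct-definable functions of expanded tuples inherits NIP, with no case analysis --- while the paper's argument is self-contained and uses nothing about NIP beyond the boolean-combination reduction. Two small repairs to yours: the indiscernibility transfer is not where Corollary \ref{cor:type} enters --- that corollary gives the nontrivial converse ($\calL$-types of expansions determine $\calL_t$-types), whereas the direction you need is trivial, since each $\lambda_{p(t)}$ is an $\calL_t$-function symbol, so any $\calL$-formula about the expanded tuples is literally an $\calL_t$-formula about the original tuples, and equality of $\calL_t$-types forces equality of the relevant $\calL$-types of expansions; and your rewriting should also account for constant terms $\lambda_{q(t)}(1)$, which survive as genuine $\calL$-parameters in $\psi$ --- harmless, because $\calL_t$-indiscernibility over $\emptyset$ yields $\calL$-indiscernibility over these $\calL_t$-$\emptyset$-definable constants, and rational scalar multiplication is $\calL$-definable in a divisible ordered group, so the $\Q$-linear change of basis into $I$ is expressible in $\psi$.
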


\begin{proof}
We let $\calM \models T_{t}$ be a monster model of $T_t$.
We suppose for a contradiction that there is an $\calL_t$-formula $\varphi(x, y)$ along with an element $a \in M$ and indiscernible sequence $( b_i )_{i< \omega}$ of elements in $M^{|y|}$ that witnesses IP,  i.e. $\calM \models \varphi(a, b_i)$ precisely if $i$ is even. Let $|y| = n$ and for each $i<\omega$ we denote the $j$-coordinate of $b_i$ by $b_{i,j}$. By quantifier elimination in the language $\calL_t$, we can assume that the formula $\varphi(a, y)$ is equal to a boolean combination of formulas of the form
\begin{enumerate}
    \item $a- \sum_{j=1}^n \lambda_{q_{j}(t)}(y_j) =0$
\item $a - \sum_{j=1}^{n} \lambda_{q_{j}(t)}(y_j) >0$
\end{enumerate}
where $q_1,\dots,q_{n}(t)\in \Q(t)$.
Since NIP is preserved under boolean combinations, we can assume that $\varphi$ is of the form (1) or (2). For ease of notation, let $f(b_i)=\sum_{j=1}^{n} \lambda_{q_{j}(t)}(b_{i,j})$ for each $i<\omega$. \newline

\noindent Suppose that $\varphi$ is of form (1). 
We suppose without loss of generality that 
$
a - f(b_i) =0
$
holds if and only if $i<\omega$ is odd. Then we have that $f(b_1)=f(b_3)$, but $f(b_2)\neq f(b_1)$. Thus we conclude $\tp_{\calL_t}(b_1b_2) \neq \tp_{\calL_t}(b_2b_3)$, contradicting indiscernability.\newline


\noindent Now assume that $\varphi$ is of the form (2).
Without loss of generality assume that $
a - f(b_i)>0$
holds if and only if $i<\omega$ is odd.
Then for all $i<\omega$, we have $a < f(b_{2i})$  and $a> f(b_{2i+1}).$
However, this means that
$f(b_1) < f(b_2)$ and $f(b_2)>f(b_3)$. 
So we again obtain $\tp_{\calL_t}(b_1b_2) \neq \tp_{\calL_t}(b_2b_3)$, contradicting indiscernability.

\end{proof}

\noindent Thus $\calR_t\models \operatorname{DC}+\operatorname{EP}+\operatorname{NIP}$, but $\calR_t$ is not o-minimal. This gives a negative answer to Question 2.

\begin{prop}
No completion of the theory $T_t$ is strong.
\end{prop}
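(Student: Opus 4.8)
The plan is to show that $T_t$ is not strong by exhibiting an infinite array of formulas witnessing a failure of the strong order property—concretely, by constructing an inp-pattern (a witness to infinite burden) using the density axiom (T2). Recall that a theory is strong if it has finite burden (no infinite inp-pattern). Since we have already established that $\calR_t$ has infinite dp-rank (as remarked in the introduction), and dp-rank bounds burden from below, the natural route is to directly exhibit infinitely many mutually "independent" partitioning families. The formulas of interest come from the scalar multiplications: for each non-constant $q(t) \in \Q(t)$, the map $\lambda_{q(t)}$ has dense graph, and these dense graphs for $\Q$-linearly independent parameters behave independently by Proposition \ref{prop:density}.

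First I would set up an inp-pattern of infinite depth. Fix a monster model $\calM \models T_t$. Choose an infinite sequence $p_1(t), p_2(t), \ldots \in I$ of pairwise distinct non-constant elements. For each $k \in \N$, consider the formula $\varphi_k(x; y_k) := (y_k < \lambda_{p_k(t)}(x) < y_k + 1)$ in a single variable $x$, together with a sequence of parameters $(c_{k,j})_{j < \omega}$ spaced more than $1$ apart along the $y_k$-axis, so that each row $\{\varphi_k(x; c_{k,j})\}_{j<\omega}$ is $2$-inconsistent (an element $x$ can satisfy at most one instance in row $k$). The heart of the argument is to verify that every path through this array is consistent: for any choice function $\sigma: \N \to \omega$, the partial type $\{\varphi_k(x; c_{k,\sigma(k)}) : k \in \N\}$ is finitely satisfiable. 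This is exactly where Proposition \ref{prop:density} (equivalently axiom (T2)) does the work: any finite subset involves finitely many $\lambda_{p_k(t)}$ with $p_1(t), \ldots, p_N(t)$ distinct elements of $I$ and hence $\Q$-linearly independent, so $\lambda_{(p_1(t), \ldots, p_N(t))}$ has dense image in $M^N$, and density lets us find a single $x$ whose image lands in the prescribed box $\prod_{k \le N}(c_{k,\sigma(k)}, c_{k,\sigma(k)}+1)$.

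Having produced an inp-pattern of infinite depth in the single variable $x$, I would conclude that $T_t$ has burden at least $\aleph_0$, so in particular the burden is not finite, and therefore $T_t$ is not strong. To make this fully rigorous within the paper's conventions, I would record the definition of strong (finite burden / no infinite inp-pattern) and cite a standard reference—Simon \cite{S15}—for the equivalence and for the fact that exhibiting an infinite inp-pattern refutes strength. I would also note that the rows are genuinely $k$-indexed independent families because the inconsistency within each row is witnessed purely by the linear constraint on the single coordinate $\lambda_{p_k(t)}(x)$, while consistency along paths is witnessed by the joint density of the distinct scalars.

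**The main obstacle** I anticipate is purely bookkeeping rather than conceptual: carefully arranging the parameters $(c_{k,j})_{j<\omega}$ and the widths of the intervals so that each row is uniformly $2$-inconsistent while simultaneously guaranteeing that the boxes along every path are nonempty open sets in $M^N$ (so that density applies). One must ensure the inconsistency in row $k$ is robust—e.g. by taking the $c_{k,j}$ to differ by at least $1$—and that these choices do not interfere across rows, which is automatic since the constraint in row $k$ only concerns the single coordinate $\lambda_{p_k(t)}(x)$ and the relevant scalars are $\Q$-linearly independent. Once the pattern is laid out, consistency of finite paths is an immediate application of Proposition \ref{prop:density}, and no further model-theoretic machinery beyond the definition of burden is required.
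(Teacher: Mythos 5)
Your proposal is correct and takes essentially the same approach as the paper: both exhibit an inp-pattern of infinite depth in a single variable, with rows indexed by distinct elements $p_k(t)\in I$ via the scalar maps $\lambda_{p_k(t)}$, row-inconsistency coming from pairwise disjoint intervals, and consistency of every path coming from axiom (T2) together with compactness. One cosmetic remark: your aside that ``dp-rank bounds burden from below'' has the inequality backwards---in general burden is at most dp-rank, with equality under NIP---but nothing in your argument depends on it, since you construct the infinite inp-pattern directly.
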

\begin{proof}
Fix a family $(q_j(t))_{j\in \N}$ of distinct elements of $I$.
Consider the family of $\calL_t$-formulas given by $(\lambda_{q_j(t)}(x) \in (a_k,b_k))_{j,k \in \N}$ such that 
\begin{itemize}
    \item $(a_k, b_k) \cap (a_{\ell},b_{\ell}) = \emptyset$,
for all $\ell \neq k \in \N$, and
\item the tuples $(a_k,b_k)_{k \in \N}$ form an indiscernible sequence.
\end{itemize}
In the array that corresponds to varying $j \in 
\N$ along the rows and the $k\in \N$ along the columns, it is easy to see that formulas in the same row are pairwise inconsistent.
However, for every path $(\lambda_{q_{\gamma(k)}(t)}(x) \in (a_{\gamma(k)}, b_{\gamma(k)}))_{k \in \N}$, every finite subset of these formulas are consistent by our axiom scheme (T2). 
So by compactness, every path through the entire array is consistent.
\end{proof}

\section{Question 4}

Let $T$ be a theory extending the theory of dense linear orders without endpoints in an language $\calL$. We write $T\models \operatorname{UF}$ if every model of $T$ satisfies $\operatorname{UF}$. The main goal of this section is to establish the following theorem. 

\begin{thm}\label{thm:sklmf}
Suppose that $T\models \operatorname{DC}+\operatorname{UF}$. Let $T'$ be an open core of $T$. 
There is a theory $T_{\operatorname{Sk}}^{\infty}$ extending $T$ such that $T_{\operatorname{Sk}}^{\infty}$ has  definable Skolem functions and $T'$ is an open core of $T_{\operatorname{Sk}}^{\infty}.$
\end{thm}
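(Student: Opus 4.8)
The plan is to build $T_{\operatorname{Sk}}^{\infty}$ as the union of an increasing chain of theories $T=T_0\subseteq T_1\subseteq T_2\subseteq\cdots$ in languages $\calL=\calL_0\subseteq\calL_1\subseteq\cdots$, where $T_{i+1}$ is obtained from $T_i$ by \emph{generically} adjoining a Skolem function for every $\calL_i$-definable set, in the spirit of Winkler's generic expansion \cite{W75} and the iterate-to-closure strategy of Kruckman and Ramsey \cite{KR18}. The content of the construction is concentrated in a single-step lemma, which I would state as: if $T_i\models\operatorname{DC}+\operatorname{UF}$ has $T'$ as an open core, then so does $T_{i+1}$, and $T_{i+1}\models\operatorname{DC}+\operatorname{UF}$. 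Granting this lemma, the limit is routine. Set $T_{\operatorname{Sk}}^{\infty}=\bigcup_i T_i$ in $\calL_{\operatorname{Sk}}^{\infty}=\bigcup_i\calL_i$. Any $\calL_{\operatorname{Sk}}^{\infty}$-formula uses only finitely many of the new function symbols and is therefore an $\calL_i$-formula for some $i$; the set it defines already receives a definable Skolem function in $T_{i+1}$, so $T_{\operatorname{Sk}}^{\infty}$ has definable Skolem functions. Similarly, any open set definable in a model $\mathcal{S}\models T_{\operatorname{Sk}}^{\infty}$ is $\calL_i$-definable for some $i$; since $\mathcal{S}\!\restriction\!\calL_i\models T_i$ has open core $T'$, that set is $T'$-definable, and as the relevant $T'$-structure is the common reduct $\mathcal{S}^\circ$ independent of $i$, we get $T'$ as an open core of $T_{\operatorname{Sk}}^{\infty}$.

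For the single step, the key simplification is that $\operatorname{DC}$ and $\operatorname{UF}$ already provide definable Skolem selections on the finite-fiber part. Fix $\calM\models T_i$ and an $\calL_i$-definable $A\subseteq M^m\times M^n$. By $\operatorname{UF}$ there is $N$ bounding the finite fibers, so the sets $P_{\mathrm{fin}}:=\{a:1\le|A_a|\le N\}$ and $P_\infty:=\{a\in\pi(A):A_a\text{ infinite}\}$ are $\calL_i$-definable and partition $\pi(A)$. On $P_{\mathrm{fin}}$ each nonempty fiber is finite, hence attains an iterated (lexicographic) minimum, which is $\calL_i$-definable using $\operatorname{DC}$ to form the relevant infima; this gives an $\calL_i$-definable Skolem selection on $P_{\mathrm{fin}}$, and one may set $g$ constant off $\pi(A)$. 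The only genuinely new values of the Skolem function $g$ thus occur on $P_\infty$. There I would let $T_i^{(A)}$ be $T_i$ together with axioms forcing $g$ to agree with the definable selection off $P_\infty$, and, for $a\in P_\infty$, to choose $g(a)$ \emph{generically} inside the infinite fiber $A_a$, i.e.\ $g(a)\in A_a$ and $g(a)$ avoids every $\calL_i$-definable subset of $A_a$ that is small in $A_a$. Consistency of this axiom scheme — equivalently, realizability of such generic values in a sufficiently saturated model — is exactly the point where Winkler's construction is invoked; $T_{i+1}$ is then the simultaneous generic expansion over all $\calL_i$-formulas.

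To verify that the single step preserves the open core I would run the Boxall–Hieronymi criterion \cite[Corollary 3.1]{BH12} exactly as in the proof of Theorem~\ref{thm:opencorevs}. Working in an $\aleph_0$-saturated $\calM\models T_{i+1}$ over a finite parameter set $C$, consider $D:=\{(a,g(a)):a\in P_\infty,\ a\notin\dcl_{\calL_i}(C)\}$. Genericity together with saturation should give that $D$ is dense (Condition (1)); Condition (3) is a type-transfer statement that follows because a generic value of $g(a)$ over $C$ realizes the same $\calL_i$-type as a generic point of $A_a$; and Condition (2) is the matching open-set type-realization statement. Concluding that every open $\calL_{i+1}$-definable set is $\calL_i$-definable, hence $T'$-definable, gives open-core preservation. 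One must also check $T_{i+1}\models\operatorname{DC}+\operatorname{UF}$, which should follow from the fact that the new graph is, away from $P_{\mathrm{fin}}$, a generic single-valued selection introducing no new non-uniform finite-fiber behavior.

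The main obstacle is precisely the single-step generic construction and its verification, and it has two intertwined parts. First, one must formulate a coherent notion of ``generic point of an infinite fiber'' and prove the corresponding axiom scheme is consistent, using only $\operatorname{DC}+\operatorname{UF}$ and the hypothesis that $T'$ is an open core — \emph{without} assuming the core is o-minimal, so the usual ``infinite set contains an interval'' heuristic is not directly available and must be replaced by a genericity-over-$C$ argument in the style of Theorem~\ref{thm:opencorevs}. Second, one must show this expansion simultaneously preserves $\operatorname{DC}$, $\operatorname{UF}$, and the open core, i.e.\ that the graph of the generic Skolem function contributes no new open sets and no new bounded-but-unbounded fiber phenomena; this is where the interplay of Winkler's method and the Boxall–Hieronymi criterion does the real work, and where the bookkeeping ensuring that $\operatorname{DC}+\operatorname{UF}+(\text{open core }T')$ persists along the whole chain must be carried out with care.
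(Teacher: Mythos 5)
Your global architecture---iterate a one-step generic Skolemization and pass to the union of the chain, following \cite{KR18}---is the same as the paper's, and your limit argument (every formula and every open set lives in some finite stage) is fine. The gap is in the single step, in two places. First, your axiomatization of genericity is not first order and, read literally, inconsistent: you require that $g(a)$ avoid \emph{every} $\calL_i$-definable small subset of the fiber $A_a$, but with arbitrary parameters allowed the singleton $\{g(a)\}$ is itself such a subset. Winkler's construction does not axiomatize genericity of individual values at all; the model companion $T_{\Sk}$ is axiomatized by existential-closure schemes for \emph{uniform configurations}, of the form ``if infinitely many tuples satisfy a quantifier-free $\calL$-formula together with the $\calL$-formula coding \emph{eligibility} of a configuration of Skolem equations, then some tuple satisfies the formula together with the configuration itself'' (Fact \ref{fact:axiom}). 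So consistency of your scheme is not a routine invocation of \cite{W75}; it is exactly the part you defer, and as formulated it cannot be carried out.

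Second, the open-core step fails as proposed. You want to run \cite[Corollary 3.1]{BH12} as in Theorem \ref{thm:opencorevs} with $D=\{(a,g(a)) : a\in P_\infty,\ a\notin\dcl(C)\}$, but that criterion requires $D$ to be dense in the ambient space, and the graph of a Skolem selection is confined to $A$, which may be nowhere dense; in Section 3 the density of $D$ came from the bespoke axiom (T2), which has no analogue here. Moreover you must control all open sets definable from arbitrary compositions of the new function symbols simultaneously, not the single graph of one $g$. The paper handles both points with a different criterion, \cite[Theorem 2.2]{BH12}: in a saturated model one shows that whenever the realizations of $\tp_{\calL}(a|C)$ over a Skolem-closed parameter set $C$ are dense in an open set, the realizations of the full $\tp_{\calL_{\Sk}}(a|C)$ are dense in the realizations of $\tp_{\calL}(a|C)$ (Theorem \ref{thm:ocinsat}). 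This rests on reducing $\calL_{\Sk}$-types to $\calL$-types of term-tuples (Fact \ref{fact:sametype}, via the back-and-forth system of Fact \ref{fact:bnf}) and on rewriting a system of term equations as a quantifier-free $\calL$-formula conjoined with an eligible uniform configuration (Lemma \ref{lem:setup})---precisely the bookkeeping your sketch flags as ``the main obstacle'' but does not supply. Incidentally, your worry about preserving $\operatorname{DC}$ is unnecessary: the construction only needs $\operatorname{UF}$, both for the existence of Winkler's model companion and for its preservation along the chain (Fact \ref{thm:winkler}), and a definable choice on finite fibers (lexicographic minimum) is available without $\operatorname{DC}$.
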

\noindent This immediately gives a negative answer to Question 4, as there are many documented examples of a theory $T$ with $T \models \operatorname{DC}+\operatorname{UF}$ and  o-mininal open core that is not o-minimal itself. To prove Theorem \ref{thm:sklmf} we follow a strategy of Kruckman and Ramsey \cite{KR18} and rely on a construction due to Winkler \cite{W75} allowing us to successively add definable Skolem functions to the language $\mathcal{L}$ of a given theory $T$ while preserving uniform finiteness. As explained below, this construction preserves the open definable sets by a result from \cite{BH12}. We begin by recalling notations and results from \cite{W75}.

\subsection{Skolem expansions}

Let $\calL$ be a language and $\Theta = \{\theta_{t}(x,y): t < |\calL|\}$ be an enumeration of all $\calL$-formulas $\varphi(x,y)$ where the variable $y$ has length 1. Define $\calL_{Sk}$ to be $\calL \cup \{f_{t}: t < |\calL|\}$, where the arity of $f_{t}$ is the length of the tuple $x$ appearing in $\theta_{t}(x,y)$.\newline

\noindent The \textbf{Skolem expansion $T_{+}$ of $T$} is the $\calL_{Sk}$-theory
\[
T_+ = T \cup \{ \forall x \exists y\left ( \theta_{t}(x,y) \rightarrow \theta_{t}(x, f_{t}(x))\right) \ : \ t<|\calL|\}.
\]
We refer to the $f_{t}$'s as Skolem functions.\newline

\noindent \emph{From here on we assume that $T$ has quantifier elimination in the language $\calL$ and assume that for each $\calL$-definable function $f$ there is an $\calL$-term $t$ such that $T\models \forall x \ f(x)=t(x)$}. Let $\calM_{+}\models T_{+}$, and denote its reduct to $\calL$ by $\calM$. For $A\subseteq M$ we denote by 
$\langle A \rangle_{\Sk}$ the $\calL_{\Sk}$-substructure generated by $A$.\newline 

\noindent Following \cite{W75}, we say an $\calL_{\Sk}$-formula $\chi(x_1,\dots,x_n)$ is a \textbf{uniform configuration} if it is a conjunction of equalities of the form $f_{t}(x_{i_1}, \ldots ,x_{i_m})=x_{i_0}$ involving Skolem functions. We need the following result about uniform configurations from \cite{W75}.
\begin{fact}[{\cite[p. 448]{W75}}]\label{fact:ufconfigurations}
Let $\chi(x)$ be a uniform configuration. Then there exists an $\calL$-formula $\chi'(x)$ such that for all $\calA\models T_{+}$ and $a \in A^{|x|}$ the following are equivalent:
\begin{itemize}
\item $\calA \models \chi'(a)$.
\item The result of altering the Skolem structure of $\calA$ precisely so that $\calA \models \chi(a)$ is again a model of $T_+$. 
\end{itemize}
\end{fact}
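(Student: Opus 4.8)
The plan is to read the $\calL$-formula $\chi'$ directly off the syntactic shape of the uniform configuration, exploiting the fact that altering the Skolem functions is a purely \emph{local} operation that leaves the $\calL$-reduct untouched. First I would pin down what ``altering the Skolem structure precisely so that $\calA\models\chi(a)$'' means. Write $\chi(x)$ as a conjunction $\bigwedge_{k} f_{t_k}(x_{i_1^k},\dots,x_{i_{m_k}^k}) = x_{i_0^k}$. Given $\calA\models T_+$ and the tuple $a$, the alteration keeps the $\calL$-reduct of $\calA$ fixed and redefines each $f_{t_k}$ only at the single argument tuple $(a_{i_1^k},\dots,a_{i_{m_k}^k})$, setting its value there to $a_{i_0^k}$; every other value of every Skolem function is left unchanged. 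The crucial observation is that each $\theta_t(x,y)$ is an $\calL$-formula, so its truth in $\calA$ depends only on the $\calL$-reduct; since that reduct is untouched, the truth values of all the $\theta_t$ at all tuples remain exactly as in $\calA$.

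Because of this, whether the altered structure $\calA'$ models $T_+$ reduces to two conditions on $a$ that are first-order and parameter-free in $\calL$. The Skolem axiom for $f_t$ is a pointwise constraint on each argument tuple $c$, of the form ``if $\theta_t(c,y)$ has a solution $y$, then $f_t(c)$ is such a solution''; at every argument tuple not touched by the alteration the value of $f_t$ agrees with $\calA$, so the axiom holds there automatically. Hence $\calA'\models T_+$ iff the alteration is well defined (no two conjuncts with the same function symbol are forced to take different values at the same argument tuple) and, at each touched tuple, the new value witnesses $\theta_t$ whenever a witness exists. I would therefore define $\chi'(x)$ to be the conjunction, over all conjuncts $k$, of
\[
\big(\exists y\,\theta_{t_k}(x_{i_1^k},\dots,x_{i_{m_k}^k},y)\big)\ \rightarrow\ \theta_{t_k}(x_{i_1^k},\dots,x_{i_{m_k}^k},x_{i_0^k}),
\]
together with, for every pair $k,k'$ of conjuncts sharing a function symbol $f_t=f_{t_k}=f_{t_{k'}}$, the consistency clause
\[
\Big(\bigwedge_{j} x_{i_j^k}=x_{i_j^{k'}}\Big)\ \rightarrow\ x_{i_0^k}=x_{i_0^{k'}}.
\]
This is visibly an $\calL$-formula depending only on the syntactic form of $\chi$, as required.

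It then remains to verify the equivalence in both directions, which is short. If $\calA\models\chi'(a)$, the consistency clauses guarantee the alteration is a well-defined $\calL_{\Sk}$-structure $\calA'$ with the same $\calL$-reduct (so $\calA'\models T$), the untouched tuples preserve the Skolem axioms automatically, and the implication clauses ensure the axioms hold at the touched tuples; hence $\calA'\models T_+$. Conversely, if the alteration yields a model of $T_+$, it was in particular a well-defined structure, which forces the consistency clauses, and the Skolem axioms at the touched tuples force the implication clauses; hence $\calA\models\chi'(a)$.

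The main point to get right — and the only place something could go wrong — is the locality argument: I must be sure that redefining a Skolem function at finitely many prescribed tuples neither perturbs the interpretations of the $\calL$-symbols (which is what keeps the $\theta_t$ truth values fixed) nor cascades into a violation of a Skolem axiom at some other tuple. This hinges on two structural features of the setup, both of which I would isolate explicitly: the $\theta_t$ are $\calL$-formulas in which no $f_t$ occurs, and the Skolem axioms of $T_+$ are independent pointwise constraints, one per argument tuple, so that changing $f_t$ at one tuple cannot affect the axiom at another. Once these are established, the reduction to the two $\calL$-conditions above is routine.
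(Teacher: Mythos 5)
Your proof is correct, but note that there is no in-paper argument to compare it with: the paper states this as a Fact imported verbatim from Winkler \cite[p. 448]{W75} and gives no proof. Your reconstruction is essentially Winkler's own argument, and it isolates exactly the two structural features that make it work: since a uniform configuration has variables (not nested terms) in argument position, the minimal alteration touches only the finitely many argument tuples named by $a$, with no cascading; and since each $\theta_{t}$ is an $\calL$-formula containing no Skolem symbols, the $\calL$-reduct and hence all $\theta_{t}$-truth values are untouched, while the Skolem axioms of $T_+$ are independent pointwise constraints. Eligibility therefore reduces precisely to your two clause families --- functional consistency among conjuncts sharing a symbol, and the witness implication at each touched tuple --- and both directions of the equivalence follow as you say; it is also worth observing (as your argument implicitly shows) that the ``minimal alteration'' reading and the ``some alteration making $\chi(a)$ hold'' reading coincide, since the values at touched tuples are forced by $\chi(a)$ and the original values elsewhere already satisfy the axioms. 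One small caveat: the Skolem axiom as displayed in the paper, $\forall x \exists y\,(\theta_{t}(x,y) \rightarrow \theta_{t}(x,f_{t}(x)))$, is literally equivalent to the weaker $\forall x\,((\forall y\,\theta_{t}(x,y)) \rightarrow \theta_{t}(x,f_{t}(x)))$; your proof correctly uses the intended pointwise form $\forall x\,(\exists y\,\theta_{t}(x,y) \rightarrow \theta_{t}(x,f_{t}(x)))$, which is what Winkler's construction (and the paper's later use of eligibility in its axiomatization of $T_{\Sk}$) requires.
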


\noindent In the case of Fact \ref{fact:ufconfigurations}, we say that $\chi'(x)$ \textbf{codes the eligibility of the configuration $\chi(x)$}.

\begin{lemma} \label{lem:setup}
Let $t_1(x),\dots,t_n(x)$ be $\calL_{\Sk}$-terms such that for every $i\leq n$ there is an $\mathcal{L}_{\Sk}$-function symbol $f_i$ with
\begin{equation*}
t_i(x) = f_i(x,t_1(x),\dots,t_{i-1}(x)).
\end{equation*}
Then there is an $\calL$-formula $\varphi(x,y)$ and a uniform configuration $\chi(x,y)$ such that
\[
T_+ \models \forall x \forall y \Big(\big(\varphi(x,y) \wedge \chi(x,y)\big) \leftrightarrow \big(\bigwedge_{i=1}^n y_i = f_i(x,y_1,\dots,y_{i-1})\big)\Big).
\]
\end{lemma}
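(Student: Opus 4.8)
The plan is to build the $\calL$-formula $\varphi$ and the uniform configuration $\chi$ by induction on $n$, peeling off the outermost Skolem function at each stage and invoking Fact~\ref{fact:ufconfigurations} to code the eligibility of the configuration piece. The key observation is that the right-hand side, the conjunction $\bigwedge_{i=1}^n y_i = f_i(x,y_1,\dots,y_{i-1})$, is almost already a uniform configuration: each conjunct equates a variable $y_i$ with a Skolem function applied to earlier variables. The only obstruction to reading it off directly as a $\chi$ is that the term $t_i(x) = f_i(x,t_1(x),\dots,t_{i-1}(x))$ is built from a genuine $\calL_{\Sk}$-function symbol $f_i$, and we must separate the "is this value the one the Skolem function happens to output in $\calM_+$" part from the "is it eligible for the Skolem function to output this value" part.

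First I would set $\chi(x,y) := \bigwedge_{i=1}^n y_i = f_i(x,y_1,\dots,y_{i-1})$ and observe this is a uniform configuration in the sense of the paper, since each conjunct has exactly the required form $f_t(x_{i_1},\dots,x_{i_m}) = x_{i_0}$ after identifying the $f_i$ with some $f_{t}$ from the enumeration $\Theta$. Then I would apply Fact~\ref{fact:ufconfigurations} to obtain an $\calL$-formula $\chi'(x,y)$ coding the eligibility of $\chi$. The claim I would then aim to prove is that $\varphi := \chi'$ works, i.e. that
\[
T_+ \models \forall x \forall y \Big(\big(\chi'(x,y) \wedge \chi(x,y)\big) \leftrightarrow \big(\bigwedge_{i=1}^n y_i = f_i(x,y_1,\dots,y_{i-1})\big)\Big).
\]
The right-to-left direction is immediate since the right-hand side is literally $\chi(x,y)$, and I would note that whenever $\calA \models \chi(a,b)$ the Skolem structure already realizes the configuration, so altering it to enforce $\chi$ changes nothing and $\calA$ remains a model of $T_+$; hence $\calA \models \chi'(a,b)$ by Fact~\ref{fact:ufconfigurations}. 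The left-to-right direction is trivial as well, since the conjunction $\chi'(x,y) \wedge \chi(x,y)$ entails $\chi(x,y)$, which is the right-hand side.

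The step I expect to require the most care is verifying that $\chi$ genuinely qualifies as a uniform configuration under the paper's definition and that the variables line up correctly, in particular checking that each $f_i$ appearing in the hypothesis is one of the Skolem function symbols $f_t$ indexed by $\Theta$ and that no non-Skolem $\calL$-function symbols sneak into the conjuncts. Here I would lean on the standing assumption that every $\calL$-definable function equals an $\calL$-term, which guarantees that the only function symbols needing Skolemization — and hence the only ones that can appear as an $f_i$ in the stated form — are the $f_t$'s themselves. Once the bookkeeping of matching $f_i$ to the enumeration is settled, the equivalence follows directly from the defining property of $\chi'$ in Fact~\ref{fact:ufconfigurations}, so the lemma reduces to an essentially immediate unwinding of definitions with the eligibility formula doing the real work.
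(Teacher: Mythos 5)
There is a genuine gap, and it lies exactly in the step you flagged as ``bookkeeping.'' The hypothesis of the lemma allows each $f_i$ to be \emph{any} $\mathcal{L}_{\Sk}$-function symbol, and since $\calL_{\Sk} = \calL \cup \{f_t : t < |\calL|\}$, this includes the original $\calL$-function symbols. This is not an accident: the lemma is later applied (in Fact \ref{fact:bnf} and Theorem \ref{thm:ocinsat}) to decompositions of arbitrary $\calL_{\Sk}$-terms, and such terms are built using $\calL$-symbols such as $+$ as well as Skolem symbols --- e.g.\ $t_2(x) = t_1(x) + x$ has $f_2 = +$. Your appeal to the standing assumption does not rescue you here: that assumption says every $\calL$-definable function agrees with an $\calL$-term, which constrains definable functions, not the function symbols occurring in terms; it in no way implies that only the $f_t$'s can appear as an $f_i$. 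Consequently your $\chi := \bigwedge_{i=1}^n y_i = f_i(x,y_1,\dots,y_{i-1})$ is in general \emph{not} a uniform configuration --- the definition requires every conjunct to involve a Skolem function --- so Fact \ref{fact:ufconfigurations} cannot be applied to it, and the statement about ``altering the Skolem structure'' is meaningless for conjuncts built from $\calL$-symbols, whose interpretations are pinned down by $T$.

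The paper's proof handles precisely this point: it sets $J \subseteq \{1,\dots,n\}$ to be the indices with $f_i \in \calL_{\Sk}\setminus\calL$, takes $\chi(x,y) := \bigwedge_{i\in J} f_i(x,y_1,\dots,y_{i-1}) = y_i$ (a genuine uniform configuration) and $\varphi(x,y) := \bigwedge_{i\notin J} f_i(x,y_1,\dots,y_{i-1}) = y_i$ (a genuine $\calL$-formula), and the equivalence is then immediate. Note also that the eligibility formula $\chi'$ plays no role in this lemma at all --- it enters only later, via the axiomatization in Fact \ref{fact:axiom}. In the degenerate case where all $f_i$ happen to be Skolem symbols, your construction does yield a correct pair (your argument that $T_+ \models \chi \to \chi'$ is fine, though one could just as well take $\varphi$ to be a tautology), but that case misses the actual content of the lemma, which is the separation of the $\calL$-part from the Skolem part so that the two pieces can later be fed into the $\exists^\infty$-axioms of $T_{\Sk}$.
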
 
\begin{proof}
Let $J\subseteq \{1,\dots,n\}$ be the set of all $i$ such that $f_i \in \calL_{\Sk}\setminus \calL$. Let $\chi(x,y)$, where $y=(y_1,\dots,y_n)$, be the uniform configuration given by
\[
\bigwedge_{i\in J} f_i(x,y_1,\dots,y_{i-1}) = y_i
\]
and let $\varphi(x,y)$ be the $\calL$-formula given by
\[
\bigwedge_{i\in \{1,\dots,n\}\setminus J} f_i(x,y_1,\dots,y_{i-1}) = y_i.
\]
It is easy check this pair of formulas has the desired property.
\end{proof}

\noindent One of the main results in \cite{W75} is that if $T\models \operatorname{UF}$, then the Skolem expansion has a model companion. Indeed, more is true:

\begin{fact}[{\cite[Theorem 2, Corollary 3]{W75}}]\label{thm:winkler} Let $T\models \operatorname{UF}$. Then the Skolem expansion $T_{+}$ has a model companion $T_{\operatorname{Sk}}$ that satisfies $\operatorname{UF}.$
\end{fact}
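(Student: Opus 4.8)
The plan is to realize $T_{\Sk}$ as the theory of the existentially closed models of $T_+$ and to prove that this class is elementary, which is equivalent to the existence of a model companion; the reduction of quantifier-free $\calL_{\Sk}$-content carried out in Lemma \ref{lem:setup} together with the eligibility coding of Fact \ref{fact:ufconfigurations} are exactly the tools that make this class first-order axiomatizable. Concretely, recall that $T_+$ has a model companion precisely when, for every quantifier-free $\calL_{\Sk}$-formula $\phi(x,y)$ (with $y$ a tuple of variables), the class of substructures $\calA \models (T_+)_\forall$ admitting a tuple $a$ for which $\exists y\, \phi(a,y)$ becomes true in some extension $\calB \supseteq \calA$ with $\calB \models T_+$ is uniformly definable by a first-order formula $\sigma_\phi(x)$. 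I would produce each $\sigma_\phi$ explicitly and then set
\[
T_{\Sk} := T_+ \cup \{\, \forall x\,(\sigma_\phi(x) \to \exists y\, \phi(x,y)) \,\}_\phi.
\]

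First I would normalize $\phi$. Introducing new variables for the values of the Skolem terms occurring in $\phi$ and applying Lemma \ref{lem:setup} to the enlarged variable tuple, I can split the Skolem content of $\phi$ into a uniform configuration $\chi(x,y,z)$ and an $\calL$-formula $\varphi(x,y,z)$, where $z$ collects the auxiliary term values. The statement ``$\exists y\, \phi(a,y)$ holds in some $\calB \models T_+$ extending $\calA$'' then becomes ``there exist $y,z$ in some extension realizing $\varphi$ while the configuration $\chi$ can be imposed consistently.'' By Fact \ref{fact:ufconfigurations} the eligibility of $\chi$ is captured by an $\calL$-formula $\chi'$, so the whole solvability question collapses to: is $\varphi(a,y,z) \wedge \chi'(a,y,z)$ satisfiable in some $\calL$-extension of the $\calL$-reduct of $\calA$? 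Since $T$ has quantifier elimination in $\calL$, satisfiability in an $\calL$-extension is itself expressed by a quantifier-free, hence first-order, $\calL$-formula in $x$, and this is my $\sigma_\phi$.

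With the $\sigma_\phi$ in hand I would verify that the models of $T_{\Sk}$ are exactly the existentially closed models of $T_+$: the axioms force solvability whenever it is possible in an extension, which is the definition of existential closedness once one knows, via quantifier elimination for $T$ and the configuration reduction above, that every existential $\calL_{\Sk}$-formula reduces to the normalized form. This identifies $T_{\Sk}$ as the model companion. It then remains to check $T_{\Sk} \models \operatorname{UF}$: by model-completeness of $T_{\Sk}$ and the same reduction, any $\calL_{\Sk}$-definable family of fibers $\{\,b : (a,b)\in A\,\}$ is, up to the eligibility $\calL$-conditions, the image under finitely many Skolem terms of an $\calL$-definable family; the fibers of the latter are uniformly finite or infinite by $\operatorname{UF}$ for $T$, and composing with Skolem terms (whose branching is bounded by the eligibility data) preserves a uniform bound.

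The step I expect to be the main obstacle is the global consistency underlying $\sigma_\phi$: adjoining a witness $y$ to a model $\calM_+$ forces one to define \emph{all} Skolem functions on the generated substructure $\langle A \cup \{y\}\rangle_{\Sk}$ at once, an a priori infinite web of interdependent assignments, and to show this can always be completed to a model of $T_+$. This is exactly where $\operatorname{UF}$ is indispensable: the uniform bound on fiber sizes means that only finitely many genuinely distinct Skolem constraints arise along any configuration, so Fact \ref{fact:ufconfigurations} applies to a \emph{bounded} configuration and the consistency question becomes the finite, first-order eligibility condition $\chi'$ rather than an infinite conjunction. Without $\operatorname{UF}$ the eligibility coding would not be uniform and the criterion $\sigma_\phi$ would fail to be first-order; this is the technical heart of Winkler's argument.
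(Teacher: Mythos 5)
You are attempting to prove a statement the paper itself does not prove: it is quoted from Winkler \cite{W75}, and the only trace of the argument in the paper is the recorded axiomatization in Fact \ref{fact:axiom}. That axiomatization exposes a genuine gap in your construction of $\sigma_\phi$. You define $\sigma_\phi$ as ``$\varphi\wedge\chi'$ is satisfiable in some $\calL$-extension of the reduct,'' and note that by quantifier elimination this is first-order. But quantifier elimination for $T$ means every extension between models of $T$ is elementary, so your criterion collapses to plain satisfiability of $\varphi\wedge\chi'$ in the model itself, and the resulting scheme $\exists y\,(\varphi\wedge\chi')\rightarrow\exists y\,(\varphi\wedge\chi)$ is too strong---in fact inconsistent with $T_+$. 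Concretely, take a formula whose witness is unique (say pinned by an $\calL$-term), and two incompatible uniform configurations $\chi_1$ demanding $f_{t}(\bar a)=b_1$ and $\chi_2$ demanding $f_{t}(\bar a)=b_2$ at that witness: each is eligible in the sense of Fact \ref{fact:ufconfigurations} (one \emph{could} alter the Skolem structure to realize either), so both of your axioms fire, yet no single model realizes both, and no extension can produce new witnesses because finite solution sets are frozen under elementary extension. This is precisely why Winkler's axioms in Fact \ref{fact:axiom} have $\exists^{\infty}$, not $\exists$, in the hypothesis: infinitely many eligible candidates guarantee, in a suitable elementary extension of the $\calL$-reduct, a witness outside the Skolem hull of the parameters, where the Skolem functions are not yet constrained and can be defined to realize $\chi$ without touching the given model. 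Eligibility alone does not license altering the Skolem structure of a model that must be preserved as a substructure.

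Relatedly, you misplace the role of $\operatorname{UF}$. The eligibility coding of Fact \ref{fact:ufconfigurations} needs no uniform finiteness at all (the paper invokes it before assuming $T\models\operatorname{UF}$); your claim that ``without $\operatorname{UF}$ the eligibility coding would not be uniform'' is not where the hypothesis enters. Rather, $\operatorname{UF}$ is what makes the quantifier $\exists^{\infty}$ in Fact \ref{fact:axiom} uniformly first-order: for each $\varphi\wedge\chi'$ there is a single $N$ such that the fiber is infinite iff it has more than $N$ elements, so the axiom scheme is expressible in $\calL_{\Sk}$; and it is the same hypothesis that Winkler then propagates to conclude $T_{\Sk}\models\operatorname{UF}$ (his Corollary 3), a step your closing sketch (``composing with Skolem terms preserves a uniform bound'') asserts rather than proves. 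Your overall architecture---normalize via Lemma \ref{lem:setup}, code eligibility via Fact \ref{fact:ufconfigurations}, axiomatize the existentially closed models---is the right skeleton, but without the $\exists^{\infty}$ hypothesis the theory you write down is not consistent with $T_+$, so the proof as proposed fails at its central step.
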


\noindent \emph{From here on we assume that $T\models \operatorname{UF}$}. We have the following axiomatization of the model companion of the Skolem expansion.

\begin{fact}[{\cite[p. 447]{W75}}]\label{fact:axiom}
The theory $T_{\Sk}$ is axiomatized as the expansion of $T_+$ by the set $\Phi$ of all sentences of the form $\forall x_1 \ldots \forall x_k \psi(x)$, where $x=(x_1,\dots,x_n)$ and
\begin{enumerate}[(i)]
    \item $\psi(x)= \exists^{\infty} x_{k+1} \ldots x_n \varphi(x) \land \chi'(x) \rightarrow \exists x_{k+1}, \ldots x_n \varphi(x) \land \chi(x)$,
    \item $\varphi(x)$ is a quantifier free $\calL$-formula,
    \item $\chi(x)$ is a uniform configuration,
    \item $\chi'(x)$  codes the eligibility of the configuration $\chi(x)$.
\end{enumerate}
\end{fact}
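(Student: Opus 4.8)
\section*{Proof proposal for Fact \ref{fact:axiom}}

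The plan is to identify the models of $T_{\Sk}$ with the existentially closed models of $T_+$ and then to show that a model of $T_+$ is existentially closed exactly when it satisfies every sentence of $\Phi$. Since $T_+$ has a model companion $T_{\Sk}$ by Fact~\ref{thm:winkler}, a structure models $T_{\Sk}$ if and only if it is an existentially closed (e.c.) model of $T_+$; thus it suffices to prove that for $\calM_+\models T_+$ one has $\calM_+\models\Phi$ iff $\calM_+$ is e.c. First I would note that each sentence of $\Phi$ is genuinely first order despite the quantifier $\exists^{\infty}$: because $T\models\operatorname{UF}$, the family cut out by $\varphi$ admits a uniform bound $N$, so that $\exists^{\infty}x_{k+1}\dots x_n\,\varphi$ may be replaced by $\exists^{>N}x_{k+1}\dots x_n\,\varphi$, which is expressible in first order logic.

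The technical heart of the reduction is a normal form for existential $\calL_{\Sk}$-formulas obtained by combining quantifier elimination for $T$ with Lemma~\ref{lem:setup}. Given an existential formula $\exists\bar z\,\rho(\bar a,\bar z)$ with $\rho$ quantifier free, I would introduce auxiliary variables naming the values of the nested Skolem subterms appearing in $\rho$ and apply Lemma~\ref{lem:setup} to rewrite it, modulo $T_+$, as $\exists\bar z\,(\varphi(\bar a,\bar z)\wedge\chi(\bar a,\bar z))$ with $\varphi$ a quantifier free $\calL$-formula and $\chi$ a uniform configuration. Deciding existential closure of $\calM_+$ therefore reduces to deciding, for formulas of exactly this shape, whether realizability in some extension $\calN_+\models T_+$ forces realizability in $\calM_+$; the eligibility formula $\chi'$ from Fact~\ref{fact:ufconfigurations} is precisely the $\calL$-definable certificate measuring when the configuration $\chi$ is compatible with $T_+$, and it is this shape that the axioms of $\Phi$ record.

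For the direction $T_+\cup\Phi\Rightarrow\text{e.c.}$, I would take $\calM_+\models T_+\cup\Phi$ together with a realization of $\exists\bar z(\varphi\wedge\chi)$ in an extension $\calN_+\models T_+$, and verify the premise of the corresponding axiom in $\calM_+$. Since $T$ has quantifier elimination it is model complete, so the $\calL$-reduct inclusion $\calM\subseteq\calN$ is elementary; hence the $\calL$-formula $\chi'(\bar a)$, true in $\calN$ because $\chi$ is realized and therefore eligible there, descends to $\calM$. Uniform finiteness then splits the argument: the fiber $\{\bar z:\varphi(\bar a,\bar z)\}$ is infinite or of size at most $N$, and this size is preserved under $\calM\preceq\calN$. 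If it is infinite, the matching axiom of $\Phi$ fires and produces the realization inside $\calM_+$. If it is finite, its elements already lie in $\calM$, and since $\calM_+$ is an $\calL_{\Sk}$-substructure of $\calN_+$ the Skolem values on such a tuple agree in both structures, so the witness realizing $\chi$ in $\calN_+$ is in fact a witness in $\calM_+$.

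For the converse, $\text{e.c.}\Rightarrow T_+\cup\Phi$, I would fix an e.c. $\calM_+$ and parameters at which the antecedent of an axiom holds, namely $\varphi$ has infinitely many completions and the eligibility certificate $\chi'$ holds, and I would build an extension $\calN_+\models T_+$ of $\calM_+$ realizing $\varphi\wedge\chi$; existential closure then pulls the realization back into $\calM_+$. The main obstacle is exactly the construction of this extension: one must realize the eligible configuration $\chi$ on a completion of $\varphi$ without leaving the class of models of $T_+$. Here eligibility (Fact~\ref{fact:ufconfigurations}) supplies the consistency of the required Skolem assignment, while $\operatorname{UF}$ guarantees that imposing it on a fresh completion of $\varphi$, available because the fiber is infinite, does not violate the finiteness constraints that define $T_+$; this is the one place where the hypothesis $T\models\operatorname{UF}$ is indispensable. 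Once both implications are established, the models of $T_{\Sk}$ and of $T_+\cup\Phi$ coincide, and hence the two theories are logically equivalent, which is the claim.
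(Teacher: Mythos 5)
You should first note that the paper itself gives no proof of Fact~\ref{fact:axiom}: it is quoted from Winkler \cite[p.~447]{W75}, so your attempt can only be measured against Winkler's argument, and your skeleton is indeed the standard route: identify the models of $T_{\Sk}$ with the existentially closed models of $T_+$ (this silently uses that $T_+$ is inductive --- true here, since $T$ is model complete and the Skolem axioms are universal after rewriting $\exists y\,\theta_t(x,y)\rightarrow\theta_t(x,f_t(x))$ --- and you should say so), observe that $\operatorname{UF}$ makes $\exists^\infty$ first order, flatten Skolem terms via Lemma~\ref{lem:setup} into the shape $\exists\bar z\,(\varphi\wedge\chi)$ (modulo distributing over disjunctions, which you omit), and prove the two implications. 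Your forward direction is essentially correct, with one repairable slip: the case split, and the axiom of $\Phi$ you ``fire,'' must be governed by the fiber of $\varphi\wedge\chi'$, not of $\varphi$ --- if the $\varphi$-fiber is infinite but the $(\varphi\wedge\chi')$-fiber is finite, no axiom applies. The fix is to absorb a quantifier-free $\calL$-equivalent of $\chi'$ into $\varphi$, legitimate because eligibility depends only on $\chi$; and the step ``realization of $\chi$ implies $\chi'$'' deserves an explicit sentence (the alteration demanded by Fact~\ref{fact:ufconfigurations} is then vacuous).

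The genuine gap is in the converse, which is the actual content of the theorem: from $\calM\models\exists^\infty\bar z\,(\varphi\wedge\chi')(\bar a,\bar z)$ you must build $\calN_+\models T_+$ that \emph{genuinely extends} $\calM_+$ and realizes $\varphi\wedge\chi$. Your ``fresh completion of $\varphi$, available because the fiber is infinite'' lives inside $M$ --- the fiber is a subset of $M^{n-k}$ --- and altering Skolem values at argument tuples drawn from $M$ yields a structure that no longer contains $\calM_+$ as an $\calL_{\Sk}$-substructure, so existential closure cannot be quoted; as written, this step fails. The missing idea, and the reason the antecedent is $\exists^\infty$ rather than $\exists$, is a compactness move: pass to a proper $\calL$-elementary extension $\calN\succeq\calM$ in which the infinite $\calL$-definable set $(\varphi\wedge\chi')(\bar a,\cdot)$ acquires a realization $\bar b^*$ with \emph{new} coordinates; expand $\calN$ to a model of $T_+$ extending $\calM_+$ by keeping the old Skolem values on tuples from $M$ and choosing witnesses elsewhere; then alter Skolem values only at argument tuples of $\chi$ containing a fresh coordinate. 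Eligibility $\chi'(\bar a,\bar b^*)$, which transfers by elementarity since $\chi'$ is an $\calL$-formula, certifies via Fact~\ref{fact:ufconfigurations} that the altered structure still models $T_+$, while freshness certifies that it still extends $\calM_+$; one must additionally check that every argument tuple occurring in $\chi$ can be arranged to contain a new coordinate (existential coordinates pinned to finitely many values over $\bar a$ are a real obstruction that the precise formulation has to exclude), a point your sketch does not see. Finally, your attribution ``$\operatorname{UF}$ guarantees that imposing it \ldots does not violate the finiteness constraints that define $T_+$'' misstates the mechanism: $T_+$ is just $T$ plus Skolem axioms and has no finiteness constraints; in this step the work is done by eligibility plus freshness, and $\operatorname{UF}$'s indispensable roles are the ones you already invoked --- the first-order expressibility of $\exists^\infty$ and, via Fact~\ref{thm:winkler}, the existence of the model companion in the first place.
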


\noindent Let $\calM_{\Sk}$ be an $|\mathcal{L}|^+$ -saturated model of $T_{\Sk}$ with underlying set $M$, and denote its reduct to $\calL$ by $\calM$. We need following easy corollary of the axiomatization of $T_{\Sk}.$

\begin{fact}\label{fact:bnf} Let $\mathcal{I}$ be the set of partial $\calL$-elementary maps $\iota: X \to Y$ between $\calM_{\Sk}$ and itself such that 
\begin{itemize}
    \item $\iota$ is a partial $\calL_{\Sk}$-isomorphisms and
    \item $X=\langle X\rangle_{\Sk}$ and $Y =\langle Y \rangle_{\Sk}$.
\end{itemize}
Then $\mathcal{I}$ is a back-and-forth system.
\end{fact}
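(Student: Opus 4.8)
The plan is to verify the two defining properties of a back-and-forth system for $\mathcal{I}$: nonemptiness and the forth extension property (the back direction being symmetric, obtained by passing to inverses). Nonemptiness is immediate, since the identity on $\langle \emptyset\rangle_{\Sk}$ is a partial $\calL$-elementary $\calL_{\Sk}$-isomorphism whose domain and range are $\calL_{\Sk}$-substructures.

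For the forth property, fix $\iota \colon X \to Y$ in $\mathcal{I}$ and $a \in M$; we may assume $a \notin X$. Put $X' = \langle X, a\rangle_{\Sk}$, whose elements are the $\calL_{\Sk}$-terms $t(a,\bar c)$ with $\bar c$ a finite tuple from $X$. The goal is to produce $a' \in M$ so that the assignment $t(a,\bar c) \mapsto t(a',\iota\bar c)$ extends $\iota$ to an element $\iota'$ of $\mathcal{I}$ with domain $X'$. Because $T$ has quantifier elimination and every $\calL$-definable function is an $\calL$-term, such a map will be a partial $\calL$-elementary $\calL_{\Sk}$-isomorphism exactly when it preserves and reflects the quantifier-free $\calL_{\Sk}$-diagram of the generated structure; the only genuinely new relations to control are the graphs of the Skolem functions.

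The key device is to rewrite these quantifier-free $\calL_{\Sk}$-conditions. Given finitely many terms from $X'$, Lemma \ref{lem:setup} packages the Skolem-function equalities witnessing how they are built into a single uniform configuration $\chi(a,\bar c,\bar z)$, the remaining content being carried by an $\calL$-formula $\varphi(a,\bar c,\bar z)$; by Fact \ref{fact:ufconfigurations} the eligibility of $\chi$ is coded by an $\calL$-formula $\chi'$. On the domain side $\calM_{\Sk}\models \varphi\wedge\chi$, so in particular $\chi'$ holds. To transfer this to the range, note that since $a\notin X=\langle X\rangle_{\Sk}\supseteq\dcl_{\calL}(X)$, the element $a$ is not $\calL$-definable over $\bar c$; a finite solution set would make it so, hence by $\operatorname{UF}$ the set of solutions in the free coordinates (among which I place the coordinate of $a$) is infinite, i.e. $\exists^{\infty}(\varphi\wedge\chi')$ holds over $\bar c$. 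As $\iota$ is $\calL$-elementary, this $\exists^{\infty}$-statement and $\chi'$ transfer to $\iota\bar c$, whereupon axiom (i) of Fact \ref{fact:axiom} supplies actual elements over $\iota\bar c$ realizing $\varphi\wedge\chi$ with the correct Skolem structure. This shows that every finite fragment of the requirements on $a'$ is consistent.

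Finally, by saturation of $\calM_{\Sk}$ I realize the full collection of requirements by some $a'$ and set $\iota'(t(a,\bar c))=t(a',\iota\bar c)$. The finite-satisfiability argument, applied in both directions, shows each quantifier-free $\calL_{\Sk}$-fact is preserved and reflected, so $\iota'$ is well defined, injective, an $\calL_{\Sk}$-isomorphism onto $\langle Y,a'\rangle_{\Sk}$, and $\calL$-elementary by quantifier elimination for $T$; thus $\iota'\in\mathcal{I}$. I expect the main obstacle to be precisely this simultaneous realization: one must ensure the separately-consistent configurations cohere into a single type whose realization gives a genuinely well-defined map, creating no unwanted term-coincidences and destroying no existing ones. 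This is exactly where the reduction through Lemma \ref{lem:setup} together with the eligibility coding of Fact \ref{fact:ufconfigurations} does the work, converting the $\calL_{\Sk}$-bookkeeping into $\calL$-conditions that the $\calL$-elementary map $\iota$ already respects.
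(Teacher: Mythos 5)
Your proposal is correct and takes essentially the same route as the paper's proof: reduce by saturation to realizing finitely many conditions at a time, encode the term-building equalities via Lemma \ref{lem:setup} as $\varphi\wedge\chi$ with eligibility coded by $\chi'$, transfer the $\exists^{\infty}$-statement through the $\calL$-elementary map $\iota$, and apply the axiomatization of $T_{\Sk}$ (Fact \ref{fact:axiom}) to produce the witness with the correct Skolem structure. One cosmetic point: the infinitude of the solution set follows from $a\notin\dcl_{\calL}(X)$ (equivalently $a \notin \acl_{\calL}(X)$, since $T$ extends the theory of dense linear orders) and not from $\operatorname{UF}$, whose role is only to make $\exists^{\infty}$ expressible so that it transfers along $\iota$ and feeds into the axiom scheme.
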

\begin{proof}
Let $\iota : X\to Y$ in $\mathcal I$. Let $a \in M\setminus X$. By symmetry, it is enough to find $a'\in M$ such that there exists $\iota'\in \mathcal{I}$ extending $\iota$ such that $\iota(a)=a'$. By saturation of $\mathcal{M}_{\Sk}$, we just need to find $a' \in M$ such that for all $\mathcal{L}_{\Sk}(X)$-terms $t(x)=(t_1(x),\dots, t_n(x))$
\[
  \tp_{\mathcal{L}}(a',t(a')|Y)=\iota \tp_{\mathcal{L}}(a,t(a)|X).
\]

\noindent Without loss of generality, we can assume that there is $c\in X^m$ such that for every $i\in \{1,\dots, n\}$ there is a function symbol $f_i \in \calL_{\Sk}$ with
\begin{equation*}
t_i(x) = f_i(x,t_1(x),\dots,t_{i-1}(x),c).
\end{equation*}
Let $\varphi(x,y_1,\dots,y_n,z)$ be the $\calL$-formula and $\chi(x,y_1,\dots,y_n,z)$ be the  uniform configuration given by Lemma \ref{lem:setup}. Let the $\calL$-formula $\chi'(x,y_1,\dots,y_n,z)$ code the eligibility of $\chi(x,y_1,\dots,y_n,z)$. For ease of notation, set $y := (y_1,\dots,y_n)$.\newline

\noindent Consider an $\calL$-formula $\psi(x,y,z)$ and $c'\in X^{|c'|}
$ such that $\psi(x,y,c') \in \tp_{\mathcal{L}}(a,t(a)|X)$.
Extending $c$, we can assume that $c=c'$. By saturation of $\calM_{\Sk}$ it suffices to find $a' \in M$ such that $\calM_{\Sk} \models \psi(a',t(a'),\iota(c))$.
Since $\mathcal{M} \models \psi(a,t(a),c) \wedge \varphi(a,t(a),c) \wedge \chi'(a,t(a),c)$ and $a\not \in X$, we have that 
\[
\mathcal{M} \models \exists^{\infty} xy \ \psi(x,y,c) \wedge \varphi(x,y,c) \wedge \chi'(x,y,c).
\]
Since $\iota$ is $\calL$-elementary,
\[
\mathcal{M} \models \exists^{\infty} x y \ \psi(x,y,\iota(c)) \wedge \varphi(x,y,\iota(c)) \wedge \chi'(x,y,\iota(c))).
\]
Thus from the axiomatization of $T_{\Sk}$ we know that there is $(a',a_1',\dots,a_n')\in M^{1+n}$ such that
\[
\mathcal{M} \models \psi(a',a_1',\dots,a_n',\iota(c)) \wedge  \varphi(a',a_1',\dots,a_n',\iota(c)) \wedge \chi(a',a_1',\dots,a_n',\iota(c)).
\]
By our choice of $\varphi$ and $\chi$, we have that $a_i'=t_i(a')$ for each $i$. Thus $\calM \models \psi(a',t(a'),c).$
\end{proof}

\noindent We now collect the following easy corollary of Fact \ref{fact:bnf}.

\begin{fact}\label{fact:sametype}
Let $a,a'\in M^n$ and let $\sigma: M \to M$ be an $\calL_{\Sk}$-automorphism fixing $C$ such that $\sigma(a)=a'$ and for all $\mathcal{L}_{\Sk}$-terms $t(x)=(t_1(x),\dots, t_n(x))$
\[
\tp_{\mathcal{L}}(t(a)|C) = \tp_{\mathcal{L}}(t(a')|C).
\]
Then $\operatorname{tp}_{\calL_{\Sk}}(a|C)=\operatorname{tp}_{\calL_{\Sk}}(a'|C)$.
\end{fact}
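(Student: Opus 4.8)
The plan is to produce a single map belonging to the back-and-forth system $\mathcal{I}$ of Fact~\ref{fact:bnf} that fixes $C$ and sends $a$ to $a'$; since every map in a back-and-forth system is elementary in the ambient language $\calL_{\Sk}$, this will immediately give $\tp_{\calL_{\Sk}}(a|C) = \tp_{\calL_{\Sk}}(a'|C)$. First I would set $X = \langle Ca\rangle_{\Sk}$ and $Y = \langle Ca'\rangle_{\Sk}$. These are $\calL_{\Sk}$-substructures by construction, so the requirements $X = \langle X\rangle_{\Sk}$ and $Y = \langle Y\rangle_{\Sk}$ in the definition of $\mathcal{I}$ hold for free, and every element of $X$ can be written as $t(a)$ for some $\calL_{\Sk}$-term $t$ with parameters from $C$ (and similarly for $Y$ and $a'$).

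Next I would define $\iota\colon X \to Y$ by $\iota(t(a)) = t(a')$ and check that it is a partial $\calL_{\Sk}$-isomorphism. The role of the automorphism $\sigma$ is exactly to certify this: because $\sigma$ is an $\calL_{\Sk}$-automorphism fixing $C$ with $\sigma(a) = a'$, it commutes with every $\calL_{\Sk}$-term and fixes the parameters, so $\sigma(t(a)) = t(a')$. Thus $\iota$ is nothing but the restriction $\sigma|_{X}$, which is a genuine $\calL_{\Sk}$-isomorphism of $X$ onto $\sigma(X) = Y$; in particular it is well defined, injective, fixes $C$ pointwise, and satisfies $\iota(a) = a'$.

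It then remains to verify the remaining condition for membership in $\mathcal{I}$, namely that $\iota$ is $\calL$-elementary, and this is where the type hypothesis is used. Given an $\calL$-formula $\varphi$ and elements $t_1(a), \ldots, t_k(a)$ of $X$, the hypothesis applied to the tuple of $\calL_{\Sk}$-terms $(t_1, \ldots, t_k)$ gives $\tp_{\calL}((t_1,\ldots,t_k)(a)|C) = \tp_{\calL}((t_1,\ldots,t_k)(a')|C)$, so $\calM \models \varphi(t_1(a),\ldots,t_k(a))$ if and only if $\calM \models \varphi(t_1(a'),\ldots,t_k(a'))$. Since $\iota(t_j(a)) = t_j(a')$, this says precisely that $\iota$ preserves every $\calL$-formula, so $\iota$ is $\calL$-elementary. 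Hence $\iota \in \mathcal{I}$, and Fact~\ref{fact:bnf} yields that $\iota$ is $\calL_{\Sk}$-elementary; as $\iota$ fixes $C$ and maps $a$ to $a'$, we conclude $\tp_{\calL_{\Sk}}(a|C) = \tp_{\calL_{\Sk}}(a'|C)$.

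The only delicate point --- and the main thing to get right --- is the interaction with parameters from $C$. The elements of $\langle Ca\rangle_{\Sk}$ are built from Skolem terms that mix $a$ with elements of $C$, so to apply the term-type hypothesis in the last step I must make sure it is read as ranging over such $C$-parameter terms (equivalently, enlarge the tuple to include the relevant parameters); the automorphism $\sigma$ is what keeps the two sides aligned on these parameters, so that $\iota$ is a bona fide $\calL_{\Sk}$-isomorphism and not merely a partial $\calL$-map. Once this bookkeeping is fixed, both the $\calL$-elementarity check and the appeal to the back-and-forth system are routine.
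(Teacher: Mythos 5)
Your proof is correct and is essentially the paper's own argument: the paper states this fact without proof, as an immediate corollary of Fact~\ref{fact:bnf}, and your construction of $\iota=\sigma|_{\langle Ca\rangle_{\Sk}}$ as a member of the back-and-forth system $\mathcal{I}$ (Skolem-closed domain and range, partial $\calL_{\Sk}$-isomorphism via $\sigma$, $\calL$-elementarity via the term-type hypothesis) is exactly the fleshed-out version of that corollary. Your closing caveat is also the right reading: the hypothesis must range over $\calL_{\Sk}(C)$-terms, which is precisely how the fact is invoked in the proof of Theorem~\ref{thm:ocinsat}.
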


\subsection{No new definable open sets in $T_{\Sk}$}
Let $\calM_{\Sk}$ be an $|\calL_{\Sk}|^+$-saturated model of $T_{\Sk}$ with underlying set $M$, and denote its reduct to $\calL$ by $\calM$. Fix a subset $C\subseteq M$ of cardinality at most $|\calL_{\Sk}|$.

\begin{thm}\label{thm:ocinsat}
Let $C=\langle C \rangle_{\Sk}.$ Then every open set definable over $C$ in $\mathcal{M}_{\Sk}$ is definable in $\mathcal{M}$.
\end{thm}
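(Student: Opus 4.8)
The plan is to reuse the criterion of Boxall and Hieronymi \cite[Corollary 3.1]{BH12}, in exactly the form already exploited in the proof of Theorem \ref{thm:opencorevs}, but now with $\calL$ playing the role of the reduct and $\calL_{\Sk}$ the role of the ambient language. Fix $n\in\N$; it suffices to show that every open subset $W\subseteq M^n$ that is $\calL_{\Sk}$-definable over $C$ is $\calL$-definable over $C$. To invoke \cite[Corollary 3.1]{BH12} I must produce a set $D\subseteq M^n$ meeting its three conditions: (1) $D$ is dense in $M^n$; (2) for $b\in D$ and every open $U\subseteq M^n$, if $\tp_{\calL}(b|C)$ is realized in $U$ then it is realized in $U\cap D$; and (3) for $b,b'\in D$, the equality $\tp_{\calL}(b|C)=\tp_{\calL}(b'|C)$ forces $\tp_{\calL_{\Sk}}(b|C)=\tp_{\calL_{\Sk}}(b'|C)$. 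I take $D$ to be the set of $C$-generic tuples, that is, those $a=(a_1,\dots,a_n)$ with $a_i\notin\dcl_{\calL}(C\cup\{a_1,\dots,a_{i-1}\})$ for every $i$.

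The first two conditions are the topological inputs, and here I must be more careful than in the vector space case, since $\calM$ is not assumed o-minimal, so generic $\calL$-types need not be open. For (1): as $C$ has cardinality at most $|\calL_{\Sk}|$ while $\calM_{\Sk}$ is $|\calL_{\Sk}|^+$-saturated, the set $\dcl_{\calL}(C)$ is small and its analogues over finitely many further points are nowhere dense; density of the order together with saturation then makes the $C$-generic tuples dense in $M^n$, and their complement nowhere dense. For (2), genericity of $b$ guarantees that $\tp_{\calL}(b|C)$ is non-algebraic in each coordinate over the preceding ones, so by quantifier elimination its realizations are dense in a neighbourhood of any given realization rather than confined to a lower-dimensional locus. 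Given a realization in the open set $U$, one can therefore, working coordinate by coordinate and avoiding the nowhere dense non-generic locus, use saturation to produce a realization lying in $U\cap D$; this mirrors, at the level of density rather than openness, the role played by Lemma \ref{lem:indep} in the earlier proof.

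The heart of the matter, and the step I expect to be the main obstacle, is condition (3); this is where the Skolem machinery of the previous subsection is essential. Given $b,b'\in D$ with $\tp_{\calL}(b|C)=\tp_{\calL}(b'|C)$, I reduce, via Fact \ref{fact:sametype} and the back-and-forth system of Fact \ref{fact:bnf}, to checking that $\tp_{\calL}(t(b)|C)=\tp_{\calL}(t(b')|C)$ for every tuple of $\calL_{\Sk}$-terms $t$: once the $\calL$-types of all Skolem terms agree, the assignment $t(c,b)\mapsto t(c,b')$ is a well-defined (equalities among terms being $\calL$-atomic), $\calL$-elementary $\calL_{\Sk}$-isomorphism of $\langle Cb\rangle_{\Sk}$ onto $\langle Cb'\rangle_{\Sk}$ fixing $C$, hence an element of the system $\mathcal{I}$, which therefore extends to an $\calL_{\Sk}$-automorphism fixing $C$ and sending $b$ to $b'$. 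Thus everything comes down to showing that, for a $C$-generic tuple $b\in D$, the $\calL$-type over $C$ of each Skolem term $t(b)$ is determined by $\tp_{\calL}(b|C)$ alone. This determination is the crux, and I would establish it by induction on the nesting depth of $t$, presenting $t(b)$ through an $\calL$-formula and a uniform configuration via Lemma \ref{lem:setup} and then running the argument already used to prove Fact \ref{fact:bnf}: genericity of $b$ makes the relevant $\calL$-condition hold for infinitely many witnesses, so that uniform finiteness lets the $\exists^{\infty}$-axiomatization of $T_{\Sk}$ (Fact \ref{fact:axiom}) supply witnesses on both sides whose $\calL$-types are forced to coincide. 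Granting this, Fact \ref{fact:sametype} yields condition (3), and \cite[Corollary 3.1]{BH12} then gives that $W$ is $\calL$-definable over $C$, completing the proof.
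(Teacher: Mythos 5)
Your proposal founders on condition (3), and the failure is not a fixable gap but a false statement: for the dense set $D$ of $C$-generic tuples, $\tp_{\calL}(b|C)$ does \emph{not} determine $\tp_{\calL_{\Sk}}(b|C)$, because $T_{\Sk}$ is a model \emph{companion} and therefore makes the Skolem functions generic rather than determined. Concretely, let $f$ be the Skolem function attached to the trivial formula $\theta(x,y)$ given by $y=y$; then every configuration $f(x)=y$ is eligible, so by Fact \ref{fact:axiom} every infinite set of the form $\{(x,y):\delta(x)\wedge y<d\}$ (and likewise with $y>d$), with $\delta$ quantifier-free, meets the graph of $f$. Fixing $d\in C$, a routine compactness/saturation argument — realize the partial type saying that $x_1,x_2$ satisfy the same $\calL(C)$-formulas, avoid $\dcl_{\calL}(C)$, and $f(x_1)<d<f(x_2)$, whose finite satisfiability follows from the axiom scheme applied to an infinite cell $\delta$ of any finite partition by $\calL(C)$-formulas — produces generic $b_1,b_2$ with $\tp_{\calL}(b_1|C)=\tp_{\calL}(b_2|C)$ but $\tp_{\calL_{\Sk}}(b_1|C)\neq\tp_{\calL_{\Sk}}(b_2|C)$. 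Your proposed induction cannot repair this: the $\exists^{\infty}$-axioms assert that \emph{some} point realizes a given eligible configuration; they never pin down the value of a Skolem function at a \emph{given} point, which is what a determination of $\tp_{\calL}(t(b)|C)$ from $\tp_{\calL}(b|C)$ would require. The analogy with Theorem \ref{thm:opencorevs} breaks precisely here: in Section 3 the maps $\lambda_{q(t)}$ are rigidly part of the structure and Corollary \ref{cor:type} supplies the type determination needed for \cite[Corollary 3.1]{BH12}, whereas the Skolem expansion is generic, so no dense $D$ with your property (3) exists. (Your conditions (1) and (2) are fine — indeed (2) is automatic, since whether $a_i\in\dcl_{\calL}(C\cup\{a_1,\dots,a_{i-1}\})$ is decided by formulas in $\tp_{\calL}(a|C)$, so every realization of the type of a generic tuple is generic; your appeal to dense realizations "by quantifier elimination" is anyway unjustified without o-minimality — but none of this compensates for (3).)

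The paper avoids this obstacle by invoking \cite[Theorem 2.2]{BH12} instead of Corollary 3.1: one shows only that for every $a\in M^n$ whose $\calL$-type over $C$ has realizations dense in some open $U$, the realizations of $\tp_{\calL_{\Sk}}(a|C)$ are dense in those of $\tp_{\calL}(a|C)$ — that is, \emph{some} (not every) realization of the reduct type in $U$ exhibits the prescribed Skolem behavior. By Fact \ref{fact:sametype} and saturation this reduces to finding, for each tuple $t$ of $\calL_{\Sk}(C)$-terms and each $\calL(C)$-definable $X$ with $(a,t(a))\in X$, some $a'\in U$ with $(a',t(a'))\in X$; this is exactly where your correctly identified ingredients enter, but existentially: present $t$ via Lemma \ref{lem:setup} as an $\calL$-formula $\varphi$ plus a uniform configuration $\chi$ with eligibility code $\chi'$, observe that $\varphi\wedge\chi'$ holds at $(d_0,d)$ for the infinitely many realizations $d_0$ of the reduct type in $U$, and apply Fact \ref{fact:axiom} to get a point where $\chi$ itself holds, i.e., where the witnesses are the true Skolem values. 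So the correct argument is a density-of-realizations argument, not a type-determination argument; the latter is unavailable for generic Skolemizations.
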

\begin{proof}
By \cite[Therorem 2.2]{BH12} it is enough to show that for every $a\in M^n$ for which the set of realisations of $\operatorname{tp}_{\calL}(a|C)$ is dense in an open set, the set of realisations of $\operatorname{tp}_{\calL_{\Sk}}(a|C)$ is dense in the set of realizations of $\operatorname{tp}_{\calL}(a|C)$. Let $U\subseteq M^n$ be an open definable set such that the set of realizations of $\operatorname{tp}_{\calL}(a|C)$ intersected with $U$ is dense in $U$. It is left to show that there is $a'\in U$ such that $a'\models \tp_{\calL_{\Sk}}(a|C)$. By Fact \ref{fact:sametype} and saturation of $\calM_{\Sk}$, it is enough to find for 
\begin{itemize}
    \item every tuple $t=(t_1,\dots, t_m): M^n \to M^m$ of $\calL_{\Sk}(C)$-terms and
    \item every $\mathcal{L}(C)$-definable set $X\subseteq M^{n+m}$ with $(a,t(a)) \in X$ 
    \end{itemize}
an $a'\in U$ such that $(a',t(a'))\in X$. Fix $t$ and $X$. After increasing $m$, we can assume that there is $c\in C^{\ell}$ such that $X$ is $\mathcal{L}(c)$-definable and for every $i\leq m$
\begin{equation*}
t_i(x) = f_i(x,t_1(x),\dots,t_{i-1}(x),c)    
\end{equation*}
where $f_i$ is a function symbol in $\calL_{\Sk}$. Let $\varphi(x,y_1,\dots,y_n,c)$ be the $\calL$-formula and $\chi(x,y_1,\dots,y_n,c)$ be the  uniform configuration given by Lemma \ref{lem:setup}. Set $y=(y_1,\dots,y_n)$.
Let the $\calL$-formula $\chi'(x,y,c)$ code the eligibility of $\chi(x,y,c)$.\newline

\noindent We now prove the existence of $a'$. Let $d_0$ be a realization of $\operatorname{tp}_{\calL}(a|C)$ in $U$. 
Let $d_1,\dots,d_m\in M^m$ be such that $(d_0,d_1,\dots,d_m) \in X$ and 
\[
\mathcal M\models (\varphi \wedge \chi')(d_0,d_1,\dots,d_m,c).
\]
Since there are infinitely many realizations of $\tp_{\mathcal{L}}(a|C)$ in $U$, there are infinitely many $e\in M^{n+m}$ such that $e\in X\cap U$ and $\mathcal{M}\models (\varphi\wedge \chi')(e,c)$.
Thus by Fact \ref{fact:axiom}, there is $e=(e_0,e_1,\dots,e_m)\in M^{n+m}$ such that 
\[
(e_0,e_1\dots ,e_m)\in X \cap U \text{ and } \mathcal{M}_{\Sk} \models \chi(e_0,e_1\dots ,e_m,c).
\]
Thus $(e_1,\dots,e_m)=t(e_0)$ and we can set $a'=e_0$.
\end{proof}

\begin{cor}\label{cor:opencore}
 Let $T'$ be an open core of $T$. 
Then $T'$ is an open core of $T_{\operatorname{Sk}}$.
\end{cor}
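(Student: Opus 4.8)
The plan is to reduce the corollary to a single assertion about \emph{arbitrary} models of $T_{\Sk}$: that the Skolem expansion introduces no new open definable sets. Precisely, I would aim to show that for every $\calN \models T_{\Sk}$ with $\calL$-reduct $\calM$, every open $\calL_{\Sk}$-definable subset of $N^n$ is already $\calL$-definable over $N$. Granting this, the corollary follows quickly. The reduct $\calM$ is a model of $T$ (since $T_{\Sk} \supseteq T_+ \supseteq T$ and $T$ is an $\calL$-theory), so because $T'$ is an open core of $T$ there is $\calM' \models T'$ with $\calM^{\circ} \eqdf \calM'$. The inclusion $\calL \subseteq \calL_{\Sk}$ shows every open $\calL$-definable set is an open $\calL_{\Sk}$-definable set, so the primitives of $\calM^{\circ}$ are among those of $\calN^{\circ}$; the reduction statement gives the reverse, since each primitive of $\calN^{\circ}$ is an open $\calL$-definable set and hence a primitive of $\calM^{\circ}$. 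Thus $\calN^{\circ}$ and $\calM^{\circ}$ have the same primitives, so $\calN^{\circ} \eqdf \calM^{\circ} \eqdf \calM'$, exhibiting the required model of $T'$ and proving that $T'$ is an open core of $T_{\Sk}$.

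The substance therefore lies in transferring Theorem \ref{thm:ocinsat}, which is stated only for $|\calL_{\Sk}|^+$-saturated models, to an arbitrary $\calN \models T_{\Sk}$. Fix an open set $V = \varphi(N^n,b)$ defined by an $\calL_{\Sk}$-formula $\varphi$ and parameters $b \in N^k$, and pass to a $|\calL_{\Sk}|^+$-saturated elementary extension $\calN^* \succeq \calN$. First I would note that openness is a first-order property in $\calL_{\Sk}$: a definable set is open exactly when each of its points admits a coordinate box contained in it, and this is expressible by an $\calL_{\Sk}$-sentence over $b$ using only $<$ (no appeal to $\operatorname{DC}$ is needed). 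Hence $\varphi((\calN^*)^n,b)$ is again open, and Theorem \ref{thm:ocinsat} applies over $C = \langle b \rangle_{\Sk}$, which satisfies $C = \langle C \rangle_{\Sk}$ and has cardinality at most $|\calL_{\Sk}|$. This yields an $\calL$-formula $\psi$ and a tuple $c$ from $C$ with $\varphi((\calN^*)^n,b) = \psi((\calN^*)^n,c)$.

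The key remaining point, and the one I expect to be the main obstacle, is tracking the location of the parameters $c$. Since $\calN \preceq \calN^*$ as $\calL_{\Sk}$-structures, $N$ is closed under all Skolem functions, so $C = \langle b \rangle_{\Sk} \subseteq N$ and in particular $c \in N^{\ell}$. Consequently $\calN^* \models \forall x\,(\varphi(x,b) \leftrightarrow \psi(x,c))$ is a first-order statement over parameters lying in $N$, and by elementarity it already holds in $\calN$; therefore $V = \psi(N^n,c)$ is $\calL$-definable over $N$, completing the reduction. It is exactly here that it matters that Theorem \ref{thm:ocinsat} produces $\calL$-definability over the Skolem-closed set $C$, which descends into $\calN$, rather than over arbitrary parameters of $\calN^*$. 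Should one only extract definability over some tuple of $\calN^*$, the same conclusion still follows by instead transferring the sentence $\exists z\,\forall x\,(\varphi(x,b) \leftrightarrow \psi(x,z))$ down from $\calN^*$ to $\calN$, which again produces $\calL$-parameters inside $N$.
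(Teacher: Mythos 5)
Your proposal is correct and follows essentially the same route as the paper: both pass to a suitably saturated elementary extension, use that openness of $\varphi(x,b)$ is expressible by a sentence over the parameters so it persists upward, apply Theorem \ref{thm:ocinsat} there, and pull the definition back down by elementarity via an existential quantifier over the parameters (your fallback step is literally the paper's move producing $d'$). The only difference is bookkeeping: the paper expands $\calM_{\Sk}$ to a model of $T'\cup T_{\Sk}$ in a joint language and descends an $\calL'$-formula directly, whereas you first isolate the statement that no model of $T_{\Sk}$ has new definable open sets (so $\calN^{\circ}\eqdf\calM^{\circ}$) and only then invoke the open-core hypothesis on $T$ --- a slightly cleaner factorization, and your observation that $C=\langle b\rangle_{\Sk}\subseteq N$ is Skolem-closed even makes the existential transfer of parameters avoidable.
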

\begin{proof}
Let $\mathcal{L}'$ be the language of $T'$. Without loss of generality, we can assume that $\mathcal{L}'\cap \mathcal{L}_{\Sk}=\emptyset$.
Let $\mathcal{L}^*$ be the union of $\mathcal{L'} $ and $\mathcal{L}_{\Sk}$. Let $\mathcal{M}_{\Sk}\models T_{\Sk}$. Since $T'$ is an open core of $T$, we can expand $\mathcal{M}_{\Sk}$ to a model $\mathcal{M}^*$ of the $\mathcal{L}^*$-theory $T'\cup T_{\Sk}$. Let $X\subseteq M^n$ be an open set given by
\[
X:=\{ a \in M^n \ : \ \mathcal{M}_{\Sk} \models \varphi(a,c) \},
\]
where $\varphi$ is an
$\mathcal{L}_{\Sk}$-formula with parameters $c\in M^m$. Let $\mathcal{N}$ be an elementary extension of $\mathcal{M}^*$ that is $|\mathcal{L}|^+$-saturated. Set 
$
Y := \{ a\in N^n \ : \ \mathcal{N} \models \varphi(a,c)\}.$
Since $X$ is open, so is $Y$.
By Theorem \ref{thm:ocinsat} there is an $\mathcal{L}'$-formula $\psi(x,y)$ such that there is $d\in M^{\ell}$ with
$
Y = \{ a \in N^n \ : \ \mathcal{N} \models \psi(a,d)\}.
$
Since $\mathcal{M}^* \preceq \mathcal{N}$, there is $d'\in M^{\ell}$ such that 
$X = \{ a \in M^n \ : \ \mathcal{M}^* \models \psi(a,d')\}$.
Thus $X$ is $\mathcal{L}'$-definable. 
\end{proof}

\begin{proof}[Proof of Theorem \ref{thm:sklmf}] We are now able to complete the proof of Theorem \ref{thm:sklmf} using the same argument as in \cite[Corollary 4.9]{KR18}. Suppose $T\models \operatorname{DC}+\operatorname{UF}$ and let $T'$ be an open core of $T.$ Set $T_0$ be the Morleyization of $T$ in a language $\mathcal{L}_0$. For every $n> 0$, we will now construct a language $\mathcal{L}_n$ and an $\mathcal{L}_n$-theory $T_n$ such that
\begin{enumerate}
    \item $T_n$ has quantifier-elimination,
    \item $T_n\models \operatorname{UF}$, and
    \item $T'$ is an open core of $T_n$.
\end{enumerate}
\noindent Let $n\geq 0$, and suppose we already constructed a language $\mathcal{L}_n$ and an $\mathcal{L}_n$-theory $T_n$ with the properties (1)-(3). Let $\Phi$ be the set of $\mathcal{L}_n$-formulas
$\varphi(x,y)$ such that $|y|=1$ and
\[
T_n\models \forall x \exists ! y \ \varphi(x,y),
\]
For each $\varphi(x,y) \in \Phi$ we introduce a new function symbol $f_{\varphi}$ of arity $|x|$. Let $\tilde{\mathcal{L}}$ be the union of the $\mathcal{L}_{n}$ and $\{ f_{\varphi}\ : \ \varphi \in \Phi\}$. Let $\tilde{T}$ be the union of $T_{n}$ with the set of all $\tilde{\mathcal{L}}$-sentence of the form
\[
\forall x \forall y (f_{\varphi}(x)=y) \leftrightarrow \varphi(x,y),
\]
where $\varphi \in \Phi$. Since $\tilde{T}$ is an expansion of $T_n$ by definitions, it is easy to check that $\tilde{T}$ satisfies (1)-(3). Now consider the model companion $(\tilde{T})_{\Sk}$ of the Skolem expansion $(\tilde{T})_+$. Let $T_{n+1}$ be the Morleyization of $(\tilde{T})_{\Sk}$ in an expanded language $\mathcal{L}_{n+1}$. We know $T_{n+1}\models \operatorname{UF}$ by Fact \ref{thm:winkler}. By Corollary \ref{cor:opencore}, the theory $T'$ is an open core of $T_{n+1}$.\newline

\noindent Now set $T_{\Sk}^{\infty}:=\bigcup_{i\in \N} T_n$. From the construction, it follows immediately that $T'$ is an open core of $T_{\Sk}^{\infty}$ and that $T_{\Sk}^{\infty}$ has definable Skolem functions.
\end{proof}

\end{document}